\newcommand{\dset}{\bm{X}}
\newcommand{\ours}{\mrm{Ours}}
\DeclareMathOperator*{\stabRank}{\mrm{r}}
\DeclareMathOperator*{\med}{\mrm{Med}}
\DeclareMathOperator*{\mad}{\mrm{MAD}}
\def\com{1}
\newcommand{\Ynote}[1]{\footnote{\color{ForestGreen}Yeshwanth: #1}}
\newcommand{\Znote}[1]{\footnote{\color{Orange}Zihao: #1}}
\newcommand{\Ynote}[1]{}
\newcommand{\Znote}[1]{}
\newcommand{\yeshwanth}[1]
{
    \if\com1
        \todo[inline,color=green!30]{\small\textbf{Yeshwanth:} #1}
    \else
    \fi
}
\newcommand{\zihao}[1]
{
    \if\com1
        \todo[inline,color=red!30]{\small\textbf{Zihao:} #1}
    \else
    \fi
}
\begin{document}
\title{Statistical Barriers to Affine-equivariant Estimation}
\author{Zihao Chen\thanks{Department of Electrical Engineering and Computer Science, UC Berkeley. \texttt{zihaochen@berkeley.edu}} \and Yeshwanth Cherapanamjeri\thanks{CSAIL, Massachusetts Institute of Technology. \texttt{yesh@mit.edu}}}

\date{}

\maketitle

\begin{abstract}
    We investigate the quantitative performance of affine-equivariant estimators for robust mean estimation. As a natural stability requirement, the construction of such affine-equivariant estimators has been extensively studied in the statistics literature. We quantitatively evaluate these estimators under two outlier models which have been the subject of much recent work: the heavy-tailed and adversarial corruption settings. We establish lower bounds which show that affine-equivariance induces a \emph{strict} degradation in recovery error with quantitative rates degrading by a factor of $\sqrt{d}$ in both settings. We find that classical estimators such as the Tukey median {(Tukey '75)} and Stahel-Donoho estimator {(Stahel '81 and Donoho '82)} are either quantitatively sub-optimal \emph{even within} the class of affine-equivariant estimators or lack any quantitative guarantees. On the other hand, recent estimators with strong quantitative guarantees are not affine-equivariant or require additional distributional assumptions to achieve it. We remedy this by constructing a new affine-equivariant estimator which nearly matches our lower bound. Our estimator is based on a novel notion of a high-dimensional median which may be of independent interest. Notably, our results are applicable more broadly to \emph{any} estimator whose performance is evaluated in the \emph{Mahalanobis} norm which, for affine-equivariant estimators, corresponds to an evaluation in \emph{Euclidean} norm on isotropic distributions.
\end{abstract}

\thispagestyle{empty}
\setcounter{page}{0}
\newpage

\section{Introduction}
\label{sec:intro}

The design of estimators, robust to gross outliers in the data, has remained a mainstay of statistical research for the past 60 years \cite{huber64,htsurvey,diakonikolas2019recent}. A natural consideration of such a procedure is its stability under natural transformations of the data. Focusing on \emph{affine} transformations in particular and the fundamental estimation problem of mean estimation, several early results constructed estimators \emph{equivariant} to affine transformations of the data and studied the degree of their resilience to outliers \cite{tukeypicturing75,mrobustmest76,stahel1981,donoho1982breakdown,oja83,rousseeuwyohai84,rousseeuw85,hub85projection,liu92,dsbreakdown92}. Unfortunately, while these estimators guarantee \emph{finite} error, a sharp \emph{quantitative} understanding of their performance remains elusive. On the other hand, recent estimators \cite{lugosi2017sub,lmtrimmed} with quantitatively \emph{optimal} performance do not satisfy this natural stability property.

In this work, we quantitatively investigate the performance of affine-equivariant estimators. We find that some statistical degradation is \emph{necessary} in this setting with optimal rates degrading by a $\sqrt{d}$ factor. However, classical estimators are quantitatively \emph{sub-optimal} even within this restricted class. To remedy this, we design a novel affine-equivariant estimator with near-\emph{optimal} statistical performance and robustness. Our estimator is based on a novel notion of a high-dimensional median which may be of independent interest.

Formally, we study the robust mean estimation problem where we are given $n$ independent and identically distributed (i.i.d.) data points $X = \{X_i\}_{i = 1}^n \subset \R^d$ drawn from a distribution, $D$, with mean $\mu$ and variance $\Sigma$ along with a target failure probability $\delta$. Furthermore, an arbitrarily chosen $\eta$ fraction of the data points may be corrupted in a possibly adversarial way. The goal, now, is to design an estimator $\wh{\mu}$ with the smallest $r_\delta$ satisfying:
\begin{equation*}
    \P \lbrb{\norm{\wh{\mu} (X) - \mu}_\Sigma \leq r_\delta} \geq 1 - \delta \text{ where } \norm{x}_\Sigma \coloneqq \sqrt{x^\top \Sigma^{-1} x}.
\end{equation*}
The above notion of error, commonly referred to as the Mahalanobis Distance, is a natural affine-equivariant metric. Equivalently, the Mahalanobis distance may be viewed as measuring the \emph{Euclidean} distance under the linear transformation that renders the distribution isotropic. Hence, for affine-equivariant estimators, our results characterize the optimal achievable \emph{Euclidean} error on distributions with $\Sigma \preccurlyeq I$\footnote{Note that without any restriction on $\Sigma$, no finite error bound is possible.}. We present our results in Mahalanobis norm as our (lower) bounds hold for \emph{any} estimator whose performance is measured in this norm. Note, furthermore, that in this formulation, we make no other assumptions on the data distribution beyond the existence of a mean and variance, hence, allowing for heavy-tailed scenarios where higher-order moments might not even \emph{exist} and $\Sigma$ might not even be \emph{estimable} from the given samples.

As a point of comparison, we briefly overview the Euclidean setting where the error is measured in the \emph{Euclidean} norm and which has been the subject of much recent attention leading to optimal quantitative characterizations. Here, one aims to design an estimator minimizing $r_\delta$ satisfying:
\begin{equation*}
    \Pr \lbrb{\norm{\wh{\mu} (X) - \mu} \leq r_\delta} \geq 1 - \delta \text{ where } \norm{x} = \sqrt{x^\top x}. 
\end{equation*}
That is, one aims to find an estimator with the smallest possible \emph{Euclidean} confidence interval as a function of $n$, $d$, and $\delta$. Surprisingly, this fundamental question was only resolved recently in the pioneering work of Lugosi and Mendelson \cite{lugosi2017sub}, who provided the following characterization:
\begin{equation*}
    r_\delta = O \lprp{\sqrt{\norm{\Sigma}} \cdot \lprp{\sqrt{\frac{\stabRank (\Sigma) + \log (1 / \delta)}{n}} + \sqrt{\eta}}} \text{ where } \stabRank (\Sigma) \coloneqq \frac{\Tr \Sigma}{\norm{\Sigma}}.
\end{equation*}
When $\eta = 0$, this rate, referred to as the \emph{sub-Gaussian} rate, is known to be \emph{optimal} for Gaussians and hence, cannot be improved upon in general. This striking result holds despite making no other assumptions on the data distribution. Note that when $\Sigma \preccurlyeq I$, the above rate simplifies to: 
\begin{equation*}
    r_\delta = O \lprp{\sqrt{\frac{d + \log (1 / \delta)}{n}} + \sqrt{\eta}}.
\end{equation*}
However, for the \emph{Mahalanobis} norm, all known estimators require stronger assumptions to establish quantitative guarantees. Often, these results require the additional property that a multiplicative approximation to $\Sigma$ is estimable from the samples \cite{lmdirectiondependent20,bgsuz,dlrobsub22,dhk23,bhs23}.

Our upper bound remedying these difficulties is presented in the following theorem:
\begin{theorem}
    \label{thm:ub_afeq}
    There exist absolute constants $C_1, C_2 > 0$ such that the following hold. Let $n, d \in \N,  \delta \in (0, 1)$ and $\eta \in [0, 1 / (6d)]$. Suppose $\dset = \{X_i\}_{i = 1}^n$ are generated i.i.d. from a distribution $D$ with mean $\mu$ and covariance $\Sigma$. Then, there exists an affine-equivariant estimator, $\wh{\mu}$, which when given any $\eta$-corrupted version of $\bm{X}$ satisfies:
    \begin{equation*}
        \norm{\wh{\mu} (\dset) - \mu}_\Sigma \leq C_1 \lprp{\sqrt{\frac{d \log (1 / \delta)}{n}} + \sqrt{d\eta}}
    \end{equation*}
    with probability at least $1 - \delta$ over $\bm{X}$ when $n \geq C_2 d \log (1 / \delta)$.
\end{theorem}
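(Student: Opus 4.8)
Because $\wh{\mu}$ is affine-equivariant and the error is measured in Mahalanobis norm, applying the map $\Sigma^{-1/2}$ (and restricting to $\mathrm{range}(\Sigma)$, which carries the distribution almost surely, if $\Sigma$ is singular) lets us assume $\mu = 0$ and $\Sigma = I$; the claim becomes the \emph{Euclidean} bound $\norm{\wh{\mu}(\dset)} \leq C_1\left(\sqrt{d\log(1/\delta)/n} + \sqrt{d\eta}\right)$. For a unit vector $v$ write $\sigma_v \coloneqq \sqrt{v^\top\Sigma v}$ (equal to $1$ after the reduction) and let $r_0 \coloneqq \sqrt{\log(1/\delta)/n} + \sqrt{\eta}$ be the one-dimensional robust ``sub-Gaussian'' rate. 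The overall strategy is to reduce $d$-dimensional estimation to a family of one-dimensional robust estimation problems along all directions and recombine them through an affine-equivariant ``high-dimensional median''.

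\textbf{The estimator.} For each direction $v \neq 0$, I would form (i) a trimmed-mean location estimate $T(v)$ of $\{\langle v, X_i\rangle\}_{i=1}^n$, trimming $\asymp \eta + (1 + \log(1/\delta))/n$ of the mass from each tail (with quantile levels set by the constant \cthreshquant), and (ii) an affine-equivariant scale $s(v)$, e.g.\ a trimmed/Winsorized standard deviation of the same coordinates (with width constant \cthreshwidth). The estimator is then an (approximate) minimizer over $\theta \in \R^d$ of the outlyingness
\[
  \mathcal{O}(\theta) \;\coloneqq\; \sup_{v \neq 0}\; \frac{\lvert \langle v, \theta\rangle - T(v)\rvert}{s(v)}.
\]
Since $T$ and $s$ depend on the data only through one-dimensional empirical quantiles and truncated moments, the construction commutes with invertible affine maps, so $\wh{\mu}$ is affine-equivariant; dividing by $s(v)$ is precisely what makes the supremum over directions affine-invariant (an unnormalized Euclidean supremum would be merely orthogonally equivariant). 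This $\mathcal{O}$-minimizer is the novel high-dimensional median.

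\textbf{Analysis.} I would isolate a deterministic ``regular'' event on the corrupted sample, holding with probability $\geq 1-\delta$ once $n \geq C_2 d\log(1/\delta)$, on which: (a) each one-dimensional estimate is accurate, $\lvert T(v) - \langle v,\mu\rangle\rvert \lesssim r_0\,\sigma_v$, and moreover the $T(u_j)$ along an orthonormal eigenbasis $u_1,\dots,u_d$ of $\Sigma$ (eigenvalues $\lambda_j$) are \emph{jointly} accurate, $\sum_{j} \lvert T(u_j) - \langle u_j,\mu\rangle\rvert^2/\lambda_j \lesssim d\,r_0^2$; and (b) the scales satisfy $s(v) \lesssim \sigma_v$. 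Part (a) is the analytic core: it combines the deterministic trimming bias of a variance-$\sigma_v^2$ distribution (bounded by $\sigma_v\sqrt{\text{trim fraction}}$ via Cauchy--Schwarz) with a sub-Gaussian-type concentration bound for truncated empirical means, so that the $d$ eigendirections combine at rate $\sqrt{d/n} + \sqrt{\log(1/\delta)/n} \lesssim \sqrt{d\log(1/\delta)/n}$ rather than losing a $\log d$ to a union bound; the $\eta \leq 1/(\dirthreshconst\, d)$ fraction of corruptions moves the relevant quantiles and truncated means by a further $O(\sigma_v\sqrt{\eta})$ per direction. Given (a)–(b), substituting $\theta = \mu$ bounds $\mathcal{O}(\wh{\mu}) \leq \mathcal{O}(\mu)$, which, combined with $s(v) \lesssim \sigma_v$ and the joint accuracy along $u_1,\dots,u_d$, gives
\[
  \norm{\wh{\mu}(\dset) - \mu}_\Sigma^2 \;=\; \sum_{j=1}^d \frac{\langle u_j, \wh{\mu} - \mu\rangle^2}{\lambda_j} \;\lesssim\; d\,r_0^2 \;=\; d\left(\frac{\log(1/\delta)}{n} + \eta\right),
\]
which is the claimed bound. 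The extra $\sqrt{d}$ over the Euclidean (Lugosi--Mendelson) rate enters exactly at this Pythagorean summation over the $d$ eigendirections, and the hypotheses $n \geq C_2 d\log(1/\delta)$, $\eta \leq 1/(\dirthreshconst\, d)$ are what keep $d\,r_0^2 = O(1)$.

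\textbf{Main obstacle.} The hard part — and the reason a Stahel--Donoho-type argument does not suffice — is the interaction between affine-equivariance and heavy tails: an affine-equivariant scale $s(v)$ can be \emph{arbitrarily smaller} than $\sigma_v$ along heavy-tailed directions, so the inequality $\mathcal{O}(\wh{\mu}) \leq \mathcal{O}(\mu) \leq r_0\,\sup_v \sigma_v/s(v)$ does not by itself translate into $\lvert\langle v, \wh{\mu} - \mu\rangle\rvert \lesssim r_0\,\sigma_v$, since the resulting constraint $\lvert\langle v,\wh{\mu}\rangle - T(v)\rvert \leq \mathcal{O}(\wh{\mu})\,s(v)$ may be far too weak along a particular $v$. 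Resolving this — roughly, showing that directions with $s(v) \ll \sigma_v$ are precisely those along which $T(v)$ is correspondingly more accurate, so the two effects cancel inside the Mahalanobis sum over eigendirections — is the crux, and is where the specific choices of $T$, $s$ and the thresholds (\cthreshwidth, \cthreshquant, \dirthreshconst) must be used. A secondary, more technical, difficulty is establishing the regularity in (a) uniformly over \emph{all} directions, sharply and without any density assumption, which forces the VC/sub-Gaussian refinements above in place of a crude net-and-union bound.
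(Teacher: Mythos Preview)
Your proposal correctly identifies the reduction to the isotropic case and the overall Stahel--Donoho-type template, but the ``main obstacle'' you flag is in fact fatal to the argument as written, and the paper resolves it by a mechanism quite different from the one you sketch.

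First, you work directly on the raw sample with trimmed means, whereas the paper first applies median-of-means: it partitions $\bm{X}$ into $k \asymp \max(\eta d n, d\log(1/\delta))$ buckets and feeds only the $k$ bucket means into the median construction. This is not cosmetic: it replaces the unknown heavy-tailed law by one with controlled variance $\sqrt{k/n}\cdot I$, and it is what makes the uniform-over-$v$ concentration step go through via Ledoux--Talagrand contraction and Bousquet's inequality (applied to the \emph{truncated} functional $\psi_\tau(\langle v,\cdot\rangle)$). Your plan to get joint accuracy along an eigenbasis ``without losing a $\log d$'' is not needed in the paper's proof; the paper obtains a direction-uniform bound $|\langle v,\wh{\mu}\rangle|\le 15\sqrt{k/n}$ directly.

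Second, and more importantly, your estimator minimizes $\mathcal{O}(\theta)=\sup_v |\langle v,\theta\rangle - T(v)|/s(v)$ for a \emph{single} scale $s(v)$ per direction, and your analysis hinges on bounding $\mathcal{O}(\mu)$. As you yourself note, $s(v)$ can be arbitrarily smaller than $\sigma_v$ under only second moments, so $\mathcal{O}(\mu)$ need not be finite; the suggested fix (``directions with $s(v)\ll\sigma_v$ are those where $T(v)$ is correspondingly more accurate'') is not true in general for heavy-tailed marginals and is exactly why the Stahel--Donoho estimator has no quantitative guarantee here. The paper sidesteps this entirely: its median is defined as the intersection $T=\bigcap_{v,S} T_{v,S}$ over \emph{all} large subsets $S$ (with $|S|\ge(1-1/(3d))k$) and \emph{all} directions $v$, using the scale $\sigma_{1,S}(\{\langle v,\wh{\mu}_i\rangle\})$, the mean absolute deviation after removing $S^c$. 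Helly's theorem shows $T\neq\emptyset$ with outlyingness at most $2$ \emph{always}, for every data set --- no bound on $\mathcal{O}(\mu)$ is ever invoked. In the analysis one then \emph{chooses}, for each $v$, a convenient subset $\mc{G}_v$ (the bucket means that are uncorrupted and not truncated along $v$); the truncation lemma gives both $|\mu_{v,\mc{G}_v}|\lesssim\sqrt{k/n}$ and $\sigma_{v,\mc{G}_v}\lesssim\sqrt{k/n}$, and since $\wh{\mu}\in T_{v,\mc{G}_v}$ one reads off $|\langle v,\wh{\mu}\rangle|\lesssim\sqrt{k/n}$. The freedom to pick $S$ per direction is what replaces your unproved cancellation claim.
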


Furthermore, we exhibit lower bounds establishing that the above rate and restrictions on $\eta$ are essentially tight. Our lower bounds are proved separately for the heavy-tailed and adversarial settings and hence, our upper bound which holds for \emph{both} settings simultaneously is optimal. We start with a lower bound for the heavy-tailed setting.
\begin{theorem}
    \label{thm:lb_ht_afeq}
    There exist absolute constants, $C_1, C_2, c > 0$ such that the following holds. Let $n, d \in \N$ and $\delta \in (0, 1)$ be such that $n \geq C_1 d \log (1 / \delta)$ and $\log (1 / \delta) \geq C_2 \log (2d)$. Then, there exists a family of distributions $\mc{D}$ such that for any estimator $\wh{\mu}$:
    \begin{equation*}
        \max_{D \in \mc{D}} \Pr_{\bm{X} \thicksim D^n} \lbrb{\norm{\wh{\mu} (\bm{X}) - \mu (D)}_{\Sigma (D)} \geq c \sqrt{\frac{d \log (1 / \delta)}{n \log (d)}}} \geq \delta.
    \end{equation*}
\end{theorem}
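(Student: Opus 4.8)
The plan is to prove this by an information-theoretic (minimax) argument. I will exhibit a finite family $\mc{D} = \{D_\theta : \theta \in \Theta\}$ of heavy-tailed distributions on $\R^d$, indexed by a hidden parameter $\theta$, together with a prior on $\Theta$, such that (a) with probability at least $\delta$ over the draw of the $n$ samples the data carries essentially no information about $\theta$, and (b) no fixed point $\wh\mu$ can be within $r := c\sqrt{d\log(1/\delta)/(n\log d)}$ of $\mu(D_\theta)$ in the norm $\norm{\cdot}_{\Sigma(D_\theta)}$ for more than half the prior mass of $\theta$. Given (a) and (b), the theorem follows from a standard Le Cam / averaging argument: conditioned on the uninformative event any estimator collapses to a fixed point, which by (b) incurs error at least $r$ under some $D_\theta$ in the support, and averaging over the prior then extracts a single $D_\theta$ under which the estimator fails with probability at least $\delta$.

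For (a) I would use the usual heavy-tailed device: each $D_\theta$ is a mixture $(1-p)B_\theta + p\,C_\theta$ with $p = \Theta(\log(1/\delta)/n)$, so that $(1-p)^n \ge \delta$, and $\theta$ is arranged to influence $D_\theta$ only through the low-probability part $C_\theta$ (and through the part of $B_\theta$ that is statistically indistinguishable from a fixed reference within $n$ samples); on the event that no sample lands in the informative part the data has a $\theta$-independent law. The real work is the geometry needed for (b): $\theta$ must encode a perturbation whose \emph{Mahalanobis} size in the perturbed distribution is $\gtrsim \sqrt{d/\log d}$ times the one-dimensional scale $\sqrt{\log(1/\delta)/n}$, while keeping the covariances $\Sigma(D_\theta)$ mutually comparable so the two-point comparisons stay valid. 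This is subtle because of a conservation phenomenon: a single rare component of mass $p$ moves the mean of the resulting mixture by at most $O(\sqrt{p})$ in \emph{its own} Mahalanobis norm — the between-component covariance it creates is aligned with the mean shift it produces — so a perturbation confined to $O(1)$ directions only reproduces the sub-Gaussian rate. The construction must therefore spread $\theta$'s perturbation across $\asymp d/\log d$ (near-)orthogonal directions whose induced covariances add up in the Mahalanobis norm, and must engineer $\Sigma(D_\theta)$ so that its small-eigenvalue directions are precisely the ones carrying the hidden perturbation rather than the large ones; adding a small isotropic floor to every $B_\theta$ keeps $\Sigma(D_\theta)$ nondegenerate and comparable across $\theta$. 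The loss of the $\sqrt{\log d}$ factor relative to $\sqrt{d\log(1/\delta)/n}$, and the hypothesis $\log(1/\delta) \gtrsim \log(2d)$, should both emerge here: controlling the family simultaneously over $\asymp d$ directions costs a union bound / packing of size $\mathrm{poly}(d)$, which pushes the per-direction hidden mean down by $\sqrt{\log d}$ and requires $\log(1/\delta)$ to dominate $\log d$.

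Once the family is fixed the remaining steps are routine: compute $\mu(D_\theta)$ and $\Sigma(D_\theta)$ from the mixture identities (in particular the law of total variance and the eigenstructure of the low-rank perturbation), verify $\Sigma(D_\theta) \asymp \Sigma(D_{\theta'})$, lower bound the pairwise Mahalanobis separations, and check $(1-p)^n \ge \delta$ using $n \ge C_1 d\log(1/\delta)$.

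I expect the main obstacle to be exactly the tension flagged above: arranging that the directions along which $\theta$ conceals mean information coincide with the small-variance directions of $\Sigma(D_\theta)$, rather than — as naive constructions force — being orthogonal to them, which would collapse the bound back to the sub-Gaussian rate $\sqrt{\log(1/\delta)/n}$. Overcoming this is what makes the phenomenon genuinely affine-sensitive (Mahalanobis rather than Euclidean) and, I believe, what the $\sqrt{d}$-versus-$\sqrt{d/\log d}$ gap between the upper and lower bounds ultimately reflects.
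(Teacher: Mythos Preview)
Your framework is the right genre and you correctly isolate the geometric crux --- the hidden perturbation must live along small-eigenvalue directions of $\Sigma(D_\theta)$. But one requirement you impose is fatal: you ask that the covariances be mutually comparable, $\Sigma(D_\theta) \asymp \Sigma(D_{\theta'})$, so that ``two-point comparisons stay valid.'' If that holds, a single whitening by some $\Sigma_0^{-1/2}$ with $\Sigma_0 \asymp \Sigma(D_\theta)$ reduces the problem to Euclidean mean estimation over a family with covariance $\preccurlyeq CI$, and on such a family the Lugosi--Mendelson estimator already achieves $O\!\big(\sqrt{(d+\log(1/\delta))/n}\big)$. That rate is a factor $\sqrt{d/\log d}$ \emph{below} the target in the regime $\log(1/\delta)\gg\log d$, so no family with comparable covariances can witness the claimed lower bound against \emph{arbitrary} estimators. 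The $\sqrt{d}$ amplification over the sub-Gaussian rate comes precisely from the covariances \emph{disagreeing} in a way the estimator cannot resolve from samples; your simultaneous requirements that ``the small-variance directions carry the hidden perturbation'' and that ``$\Sigma(D_\theta)\asymp\Sigma(D_{\theta'})$'' are internally inconsistent, because if the small-variance directions are the same across $\theta$ they carry no hidden information, and if they differ the covariances are not comparable.

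The paper's construction makes this explicit. The family is $\{D_i\}_{i=1}^d$ supported on $\{e_1,\dots,e_d,\bm 0\}$: $D_i$ puts mass $\eps^2/d$ on each $e_j$ with $j\ne i$ but only $\eps^2/d^2$ on $e_i$, with $\eps \asymp \sqrt{d\log(1/(d\delta))/(n\log d)}$. Thus $\Sigma(D_i)$ has its uniquely small entry along $e_i$, and that small-variance direction is \emph{different for each $i$}; the $d$ norms $\|\cdot\|_{\Sigma(D_i)}$ are genuinely incomparable. The rare event is also not your $(1-p)^n\ge\delta$ device but a binomial tail: under $D_i$, the event that the count $m_i(\bm X)$ at $e_i$ exceeds roughly $4d$ times its mean has probability at least $d\delta$, and on that event the likelihood ratio gives $\Pr(I=j\mid\bm X)\ge\Pr(I=i\mid\bm X)$ for every index $j$ with a normal count $m_j$. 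A dichotomy on $\wh\mu(\bm X)$ then finishes: either $\wh\mu_j\le\eps^2/(2d)$ for every such $j$, and the $\Omega(d)$ moderate-variance coordinates accumulate to $\|\wh\mu-\mu(D_k)\|_{\Sigma(D_k)}\gtrsim\eps$ for all $k$; or some $\wh\mu_j>\eps^2/(2d)$, and the tiny variance of $D_j$ along $e_j$ forces $\|\wh\mu-\mu(D_j)\|_{\Sigma(D_j)}\gtrsim\eps$. The paper's introduction flags exactly this mechanism: ``the differences in the covariance matrices across the distributions in the family play a critical role and the sensitivity of the Mahalanobis norm to such differences yields our lower bound.''
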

Next, we present our lower bounds for the adversarial contamination model. Furthermore, our lower bounds hold for the weaker \emph{Huber} contamination model where an adversary is only allowed to \emph{add} corrupted points to the dataset as opposed to corrupting existing points. In the first theorem, we show that the error is unbounded if the corruption fraction exceeds $1 / (d + 1)$ for estimators that are eventually even \emph{approximately} consistent.
\begin{theorem}
    \label{thm:lb_rob_breakdown_afeq}
    For any $d > 3$ and $r > 0$, there exists a family of distributions, $\mc{D}$, and a distribution $D_0 \in \mc{D}$ such that for any $D \in \mc{D}$, there exists distribution $P$ satisfying:
    \begin{equation*}
        D_0 = \frac{d}{d + 1} D + \frac{1}{d + 1} P.
    \end{equation*}
    Furthermore, we have for any estimator, $\wh{\mu}$, and any $n \in \N$:
    \begin{equation*}
        \max_{D \in \mc{D}} \Pr_{\bm{X} \thicksim D_0^n} \lbrb{\norm{\wh{\mu} (\bm{X}) - \mu (D)}_{\Sigma (D)} \geq r} \geq \frac{1}{d + 1}.
    \end{equation*}
\end{theorem}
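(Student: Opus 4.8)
The plan is to reduce the statement to a geometric fact and then supply an explicit construction. Since the observed (corrupted) sample is always distributed as $D_0^n$, the law of $\wh\mu(\bm{X})$ is the same no matter which $D\in\mc D$ is the true distribution, so $n$ plays no role. For $D\in\mc D$ write $\mc E_D := \{y : \norm{y-\mu(D)}_{\Sigma(D)} < r\}$ for the open radius-$r$ Mahalanobis ball about $\mu(D)$, so the probability of failing against $D$ is $1-\Pr[\wh\mu(\bm{X})\in\mc E_D]$. Suppose $\mc D$ contains $d+1$ distributions $E_0,\dots,E_d$ with $\bigcap_{i=0}^d\mc E_{E_i}=\emptyset$. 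If every one of these failure probabilities were below $\tfrac1{d+1}$, a union bound would give $\Pr[\wh\mu(\bm{X})\notin\bigcap_i\mc E_{E_i}]<1$, hence $\wh\mu(\bm{X})\in\bigcap_i\mc E_{E_i}$ with positive probability, which is impossible as that intersection is empty. So some $E_i$ is failed with probability at least $\tfrac1{d+1}$. Since $D_0$ can be adjoined to $\mc D$ for free (it only raises the maximum), the task reduces to: construct $d+1$ distributions such that a common $D_0$ is a $\tfrac1{d+1}$-fraction Huber contamination of each of them, and whose Mahalanobis balls have empty common intersection, for an arbitrary target radius $r$.

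For the construction, let $v_0,\dots,v_d$ be the vertices of a regular simplex centered at the origin, so $\sum_j v_j=0$, $\norm{v_j}=R$, and $\langle v_i,v_j\rangle=-R^2/d$ for $i\ne j$. Fix $\sigma>0$ (to be chosen small in terms of $r$), put $G_j=\mc N(v_j,\sigma^2 I)$, and set
\begin{equation*}
    D_0 := \frac{1}{d+1}\sum_{j=0}^{d} G_j, \qquad E_i := \frac{1}{d}\sum_{j\ne i} G_j \ \ (0\le i\le d), \qquad \mc D := \{D_0, E_0,\dots,E_d\}.
\end{equation*}
Then $D_0 = \tfrac{d}{d+1}E_i + \tfrac1{d+1}G_i$, which is exactly the required decomposition with $P=G_i$ (and $P=D_0$ when $D=D_0$); moreover every covariance is $\succeq\sigma^2 I$, so all the Mahalanobis norms are well defined.

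The crux is that $E_i$ degenerates, as $\sigma\to0$, along the direction $v_i$, which is orthogonal to the affine hull of $\{v_j : j\ne i\}$. By the law of total covariance $\Sigma(E_i)=\sigma^2 I+M_i$, where $M_i\succeq0$ is the between-component covariance and $M_i v_i=0$; hence $\Sigma(E_i)^{-1}\succeq\tfrac{1}{\sigma^2 R^2}\,v_i v_i^\top$, so any $y\in\bigcap_{i=0}^d\mc E_{E_i}$ satisfies $|\langle v_i, y-\mu(E_i)\rangle|<\sigma R r$ for every $i$. Since $\langle v_i,\mu(E_i)\rangle=\tfrac1d\sum_{j\ne i}\langle v_i,v_j\rangle=-R^2/d$, summing these $d+1$ inequalities and using $\sum_i\langle v_i,y\rangle=0$ forces $\tfrac{(d+1)R^2}{d}<(d+1)\sigma R r$, i.e.\ $\sigma>\tfrac{R}{dr}$. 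Choosing $\sigma=\tfrac{R}{2dr}$ contradicts this, so $\bigcap_{i=0}^d\mc E_{E_i}=\emptyset$, and the reduction above proves the theorem for this (arbitrary) $r$.

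I expect the main obstacle to be conceptual rather than computational. The Huber constraint keeps every $D\in\mc D$ close to $D_0$: it forces $\Sigma(D)\preceq\tfrac{d+1}{d}\Sigma(D_0)$, so all the means lie at bounded Mahalanobis distance when measured in the single metric of $D_0$. Yet we need means that are far apart in \emph{each distribution's own} Mahalanobis metric, which is possible only if the covariances $\Sigma(E_i)$ collapse along directions that jointly span $\R^d$; the simplex facet normals $v_i$ provide exactly this, and their identity $\sum_i v_i=0$ is precisely what renders the $d+1$ per-direction constraints mutually inconsistent. This is essentially the classical construction witnessing that affine-equivariant estimators break down at contamination fraction $\tfrac1{d+1}$; the new element is the quantitative control of the Mahalanobis error, obtained by letting $\sigma\to0$ as a function of $r$ to reach arbitrarily large errors. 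The residual work — the simplex identities $\langle v_i,v_j\rangle=-R^2/d$ and $M_i v_i=0$, and the spectral bound on $\Sigma(E_i)^{-1}$ — is routine; one should also keep in mind that $D_0$ itself, whose covariance is essentially isotropic, plays no role in the emptiness argument and is included only to meet the requirement $D_0\in\mc D$.
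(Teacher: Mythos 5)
Your proposal is correct and follows essentially the same route as the paper: a union bound over $d+1$ leave-one-out contaminations of a common $(d+1)$-point simplex support, each smoothed by tiny isotropic noise of scale $\Theta(1/(dr))$ so that the covariance nearly degenerates along the left-out direction, and a contradiction obtained by summing the $d+1$ directional Mahalanobis constraints. The paper uses the support $\{e_1,\dots,e_d,\bm{0}\}$ with cube noise and contradicts via the $e_i$ and $\bm{1}$ directions, whereas you use a centered regular simplex with Gaussian noise and the identity $\sum_i v_i = 0$; these are affinely equivalent and the difference is cosmetic.
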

In the above theorem and the one following it, the distribution $P$ is to be interpreted as the distribution over corruptions that an adversary adds to the sample. Next, we show that the dependence on the corruption fraction when $\eta < 1 / (d+1)$ in \cref{thm:ub_afeq} is tight. 
\begin{theorem}
    \label{thm:lb_rob_quant_afeq}
    For any $d > 3$ and $\eta < 1 / (d + 1)$, there exists a family of distributions, $\mc{D}$, and a distribution $D_0 \in \mc{D}$ such that for any $D \in \mc{D}$, there exists distribution $P$ satisfying:
    \begin{equation*}
        D_0 = (1 - \eta) D + \eta P.
    \end{equation*}
    Furthermore, we have for any estimator, $\wh{\mu}$, and any $n \in \N$:
    \begin{equation*}
        \max_{D \in \mc{D}} \Pr_{\bm{X} \thicksim D_0^n} \lbrb{\norm{\wh{\mu} (\bm{X}) - \mu (D)}_{\Sigma (D)} \geq \frac{1}{2} \sqrt{\frac{d\eta}{1 - d\eta}}} \geq \frac{1}{d + 1}.
    \end{equation*}
\end{theorem}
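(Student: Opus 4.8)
The plan is a counting lower bound built around a single corrupted law $D_0$ that is an $\eta$‑(Huber) contamination of $d$ distinct clean hypotheses, each ``thin'' in one coordinate direction. Fix the standard basis $e_1,\dots,e_d$ of $\R^d$, a small constant $s>0$, and a large scale $M$ (how large is fixed at the end). Put
\[
    D_0 \coloneqq (1-d\eta)\,\mc{N}(0, s^2 I_d) + \eta\sum_{j=1}^d \delta_{M e_j},
\]
which is a probability distribution since $d\eta<1$ (as $\eta<1/(d+1)$), and for $j=1,\dots,d$ set
\[
    D_j \coloneqq \frac{1}{1-\eta}\lprp{D_0 - \eta\,\delta_{M e_j}}
    = \frac{1}{1-\eta}\lprp{(1-d\eta)\,\mc{N}(0, s^2 I_d) + \eta\sum_{k\neq j}\delta_{M e_k}}.
\]
Let $\mc{D}=\{D_0,D_1,\dots,D_d\}$. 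Then $D_0=(1-\eta)D_j+\eta\,\delta_{Me_j}$ for each $j$ and $D_0=(1-\eta)D_0+\eta D_0$, so $D_0$ is an $\eta$‑contamination of every member of $\mc{D}$ with added distribution $P=\delta_{Me_j}$ — this verifies the mixture requirement. It then suffices to pick $M$ so that no single point is within Mahalanobis radius $r\coloneqq \tfrac12\sqrt{d\eta/(1-d\eta)}$ of all of $\mu(D_1),\dots,\mu(D_d)$ in the respective metrics $\norm{\cdot}_{\Sigma(D_j)}$.

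The next step is to compute the first two moments. Writing $\bm{1}_j\coloneqq\sum_{k\neq j}e_k$, a direct calculation gives $\mu(D_j)=\frac{\eta M}{1-\eta}\bm{1}_j$ and
\[
    \Sigma(D_j) = \lprp{\tfrac{(1-d\eta)s^2}{1-\eta}+\tfrac{\eta M^2}{1-\eta}}\lprp{I_d-e_je_j^\top}
    + \tfrac{(1-d\eta)s^2}{1-\eta}\,e_je_j^\top - \tfrac{\eta^2M^2}{(1-\eta)^2}\,\bm{1}_j\bm{1}_j^\top .
\]
This matrix is block diagonal across $\R e_j$ and $e_j^\perp$ and (using $\eta<1/(d+1)<1/d$) positive definite, with eigenvalue $\sigma_\parallel^2\coloneqq\frac{(1-d\eta)s^2}{1-\eta}$ on $e_j$, eigenvalue $\tau^2\coloneqq(1-d\eta)\lprp{\frac{s^2}{1-\eta}+\frac{\eta M^2}{(1-\eta)^2}}$ on $\bm{1}_j$, and eigenvalue $\omega^2\coloneqq\frac{(1-d\eta)s^2}{1-\eta}+\frac{\eta M^2}{1-\eta}>\tau^2$ on the remaining $d-2$ directions of $e_j^\perp$. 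Since $\mu(D_j)$ points along the $\tau^2$‑eigenvector,
\[
    \norm{\mu(D_j)}_{\Sigma(D_j)}^2 = \frac{\eta^2M^2(d-1)}{(1-\eta)^2\tau^2}
    = \frac{\eta^2M^2(d-1)}{(1-d\eta)\lprp{s^2(1-\eta)+\eta M^2}}\ \longrightarrow\ \frac{(d-1)\eta}{1-d\eta}\quad\text{as }M\to\infty,
\]
and $\sigma_\parallel$ is of order $s$ while $\tau,\omega$ are of order $M$, with $\sigma_\parallel^2/\tau^2\le s^2(1-\eta)/(\eta M^2)$.

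For the empty‑intersection step I would argue as follows. Suppose $x$ lies in every $B_j\coloneqq\{y:\norm{y-\mu(D_j)}_{\Sigma(D_j)}\le r\}$. From $x\in B_k$ and $\mu(D_k)\perp e_k$, the $e_k$‑block forces $|x_k|\le r\sigma_\parallel$ for every $k$; hence the projection $x^\perp$ of $x$ onto $e_j^\perp$ has $\norm{x^\perp}\le\sqrt{d-1}\,r\sigma_\parallel$, and, since the $e_j^\perp$‑block of $\Sigma(D_j)$ is $\succeq\tau^2 I$, we get $\norm{x^\perp}_{\Sigma(D_j)|_{e_j^\perp}}\le\sqrt{d-1}\,r\,\sigma_\parallel/\tau\to 0$ as $M\to\infty$. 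Because $\Sigma(D_j)$ is block diagonal and $\mu(D_j)\in e_j^\perp$,
\[
    r \ \ge\ \norm{x-\mu(D_j)}_{\Sigma(D_j)}
    \ \ge\ \norm{x^\perp-\mu(D_j)}_{\Sigma(D_j)|_{e_j^\perp}}
    \ \ge\ \norm{\mu(D_j)}_{\Sigma(D_j)} - \sqrt{d-1}\,r\,\sigma_\parallel/\tau .
\]
Letting $M\to\infty$ the right side tends to $\sqrt{(d-1)\eta/(1-d\eta)}$, which is \emph{strictly} larger than $r=\tfrac12\sqrt{d\eta/(1-d\eta)}$ because $d-1>d/4$ for $d>3$. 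So for $M$ large enough (depending only on $d,\eta,s$) this chain is contradictory, i.e.\ $\bigcap_{j=1}^dB_j=\emptyset$. Consequently, for any estimator $\wh{\mu}$ and any $n$, every realization of $\bm{X}\sim D_0^n$ satisfies $\wh{\mu}(\bm{X})\notin B_j$ for some $j$, so $\sum_{j=1}^d\Pr_{\bm{X}\sim D_0^n}\lbrb{\wh{\mu}(\bm{X})\notin B_j}\ge 1$ and hence some $D_j\in\mc{D}$ has $\Pr_{\bm{X}\sim D_0^n}\lbrb{\norm{\wh{\mu}(\bm{X})-\mu(D_j)}_{\Sigma(D_j)}> r}\ge 1/d\ge 1/(d+1)$, which is the asserted bound.

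The main obstacle is designing $D_0$ so that the geometry works out: deleting the $j$‑th ``far clump'' must simultaneously (i) collapse the variance of $D_j$ to order $s^2$ along $e_j$ while leaving it of order $\eta M^2$ on $e_j^\perp$, and (ii) shift the mean by order $\eta M$ \emph{along the vector $\bm{1}_j$}, which is exactly the \emph{smaller} ($\tau^2$) eigendirection inside $e_j^\perp$; only this alignment yields a Mahalanobis displacement of the correct order $\sqrt{d\eta/(1-d\eta)}$ rather than the $\sqrt{\eta}$ produced by a generic direction, and the $d$ mutually orthogonal thin directions then obstruct any one estimate from being accurate for all $D_j$ at once. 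The quantitative core is therefore the eigendecomposition of $\Sigma(D_j)$ together with checking that the vanishing error term $\sqrt{d-1}\,r\sigma_\parallel/\tau$ and the finite‑$M$ defect in $\norm{\mu(D_j)}_{\Sigma(D_j)}$ are swallowed by the constant‑factor gap $\sqrt{d-1}-\tfrac12\sqrt{d}>0$.
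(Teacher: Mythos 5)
Your proposal is correct, and it reaches the stated bound by a genuinely different construction than the paper's. The paper works with a discrete family at unit scale: support $\{e_i\}_{i=1}^d \cup \{\bm{1}/d\}$ with bulk mass $1-d\eta$ at $\bm{1}/d$ and mass $\eta$ at each spike, perturbed by bounded cube noise $\sigma Z$, $Z \thicksim \mrm{Unif}(\{\pm 1\}^d)$ with $\sigma = \eta/(4r)$ to make the covariances non-singular; its contradiction avoids any eigendecomposition and instead applies \cref{lem:one_d_proj} to just two rank-one projections --- the direction $\bm{1}$ against $D_0$ forces $\sum_i \wh{\mu}_i \approx 1$, hence some coordinate $\wh{\mu}_i \gtrsim 1/d$, and the direction $e_i$ against $D_i$ then rules that out. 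You instead send the spikes to infinity ($Me_j$, $M \to \infty$) over a Gaussian bulk, diagonalize $\Sigma(D_j)$ exactly, and derive emptiness of the intersection of the $d$ Mahalanobis balls. Your route is computationally heavier but makes the mechanism of the rate transparent --- the mean shift $\frac{\eta M}{1-\eta}\bm{1}_j$ lands precisely on the \emph{smallest} eigendirection of the $e_j^\perp$-block, which is what upgrades $\sqrt{\eta}$ to $\sqrt{d\eta}$ --- and it actually yields slightly stronger constants (limiting displacement $\sqrt{(d-1)\eta/(1-d\eta)}$ and failure probability $1/d$ rather than $1/(d+1)$). Your verification of the Huber mixture condition $D_0 = (1-\eta)D_j + \eta\,\delta_{Me_j}$ is also clean and exact. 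Two small points worth making explicit in a final write-up: state that $\sigma_\parallel$ is the same for every $k$ (which is what lets you combine the $d$ coordinate constraints into $\|x^\perp\| \le \sqrt{d-1}\,r\sigma_\parallel$), and record the explicit finite choice of $M$ rather than leaving it as a limit.
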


Taken together, our bounds imply a marked departure from the Euclidean setting. The breakdown point and the dependence of the recovery guarantees on the failure probability and corruption factor \emph{all} decay by a factor of $d$. Intuitively, in the Euclidean setting, the optimal recovery guarantees essentially match what one would achieve when working with \emph{Gaussian} data. However, in the affine-equivariant setting, a significant cost is incurred when weaker assumptions are placed on the data distribution.

Our estimator is based on a novel notion of a high-dimensional median, inspired by the well-known Tukey median \cite{tukeypicturing75} and the Stahel-Donoho estimator \cite{stahel1981,donoho1982breakdown} and may be of independent interest. We aim to find a point whose distance to the mean along any direction is small with respect to the variance along that direction. However, the main difficulty in analyzing the estimator is establishing that such a point always exists. We define an appropriate proxy for the variance which guarantees the existence of such a median while allowing for optimal recovery guarantees. Interestingly, our analysis, similar to that of the Tukey Median, relies strongly on Helly's Theorem, a central result in convex geometry.

The key challenge in proving our lower bounds is establishing the correct dependency on the failure probability in the heavy-tailed setting. Our lower bound construction uses a family of distributions with different covariances in the standard Bayesian estimation-to-testing framework for proving minimax lower bounds (see, for example, \cite{wainwright}). In the typical heavy-tailed setting, where the terms depending on the failure probability and the dimension are decoupled, two lower bounds are established separately for each of them. These are subsequently combined to obtain the final bound. However, in our case, these two elements are intimately coupled making the application of standard techniques challenging. To overcome this, we perform an explicit analysis of the posterior over the set of candidate distributions once the data points have been generated, but only for a carefully chosen set of observations. We show that when such samples are obtained, the posterior is well-spread and that any proposed estimate performs poorly on at least some distributions in the support of the posterior. Here, the differences in the covariance matrices across the distributions in the family play a critical role and the sensitivity of the Mahalanobis norm to such differences yields our lower bound.

\paragraph{Notation:} We use $\N$ and $\R$ to denote the set of natural and real numbers respectively and for $n \in \N$, $[n] \coloneqq \{1, 2, \dots, n\}$. For $d \in \N$, $\R^d$ refers to the standard $d$-dimensional Euclidean vector space with $\bm{1}$, $\bm{0}$, and $\{e_i\}_{i = 1}^d$ denoting the all $1$s vector, the all $0$s vector, and the standard basis vectors respectively. $\mb{S}^{d - 1}$ denotes the unit sphere in $d$ dimensions. For $M_1, M_2 \in R^{d \times d}$, we use $M_1 \succcurlyeq M_2$ (and $M_1 \succ M_2$) indicates that $M_1$ is larger (and strictly larger) than $M_2$ in the standard PSD ordering. For $x \in \R^d$ and $M \in \R^{d \times d}$, $\norm{x}$ and $\norm{M}$ refer to the Euclidean and spectral norm of $x$ and $M$ respectively, and $\Tr M$ denotes the trace of $M$. Furthermore, for $\Sigma \succ 0$, $\norm{x}_\Sigma \coloneqq \sqrt{x^\top \Sigma^{-1} x}$. For a distribution $D$ over $\R^d$, we use $\mu (D)$ and $\Sigma (D)$ to denote its mean and covariance, and for $n \in \N$, $D^n$ denotes the product distribution formed by taking $n$ i.i.d. draws from $D$. We will typically use $\bm{X}$ and $\bm{Y}$ to denote multisets of points in $\R^d$ and with a slight abuse of notation write $\bm{X}, \bm{Y} \subset \R^d$. For a point set $\bm{Y} = \{y_1, \dots, y_n\} \subset \R$, we define:
\begin{equation*}
    \mu(\bm{Y}) \coloneqq \frac{1}{n} \sum_{i = 1}^n y_i,\quad \sigma_1 (\bm{Y}) = \frac{1}{n} \sum_{i = 1}^n \abs{y_i - \mu(\bm{Y})}.
\end{equation*}
Furthermore, $\med (\bm{Y})$ denotes its median and $\abs{\bm{Y}}$ denotes its cardinality. Additionally, for a (possibly infinite) subset $T \subset \R^d$, we use $\mu (T)$ to denote the average of the uniform distribution over $T$. For a proposition $A$, $\bm{1} \lbrb{A}$ is the indicator function which is $1$ if the proposition is true and $0$ otherwise.

In the remainder of the paper, we first overview related work in \cref{sec:rel_work}. We then provide some insight into the failure modes of prior work in \cref{sec:overview}. Next, we formally present our estimator along with a proof of its existence in \cref{sec:alg}. Finally, we establish quantitative upper bounds on its performance in \cref{sec:ub} and present nearly-matching lower bounds on the performance of \emph{any} estimator evaluated in Mahalanobis norm in \cref{sec:lb}.
\section{Related Work}
\label{sec:rel_work}

Here, we review prior work closely related to our own.

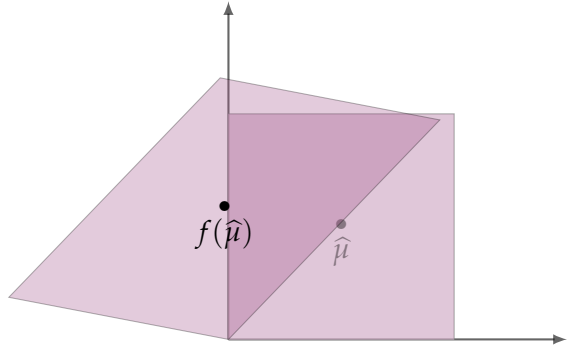
\begin{figure}
    \centering
    \begin{tikzpicture}[scale=1]
        \draw[thick,-latex, opacity=0.6] (0,0) -- (4.5,0);
        \draw[thick,-latex, opacity=0.6] (0,0) -- (0,4.5);

        \filldraw[fill=DarkOrchid, opacity = 0.3, draw=black] (0,0) rectangle (3,3);
        
        \node at (1.5, 1.5) {\textbullet};
        \node at (1.5, 1.5)[anchor = north] {$\wh{\mu}$};

        \filldraw[fill=White, opacity = 0.5, draw=White] (0,0) rectangle (3,3);
        \filldraw[fill=DarkOrchid, opacity = 0.2, draw=black] (0,0) rectangle (3,3);
        
        \filldraw[fill=DarkOrchid, opacity = 0.3,, draw=black] (0,0) -- (2.81, 2.92) -- (-0.11, 3.48) -- (-2.92, 0.56) -- cycle;

        \node at (-0.05, 1.74) {\textbullet};
        \node at (-0.05, 1.74)[anchor = north] {$f(\wh{\mu})$};
    \end{tikzpicture}
    \caption{An illustration of the affine equivariant requirement. Here, the result of an estimator $\wh{\mu}$ run on the transformation of the square to the tilted parallelogram is required to coincide with the transformation of the estimate obtained when run on the square itself.}
    \label{fig:af_eq_illus}
\end{figure}

\paragraph{Affine Equivariant Estimators:} There is a rich line of work in theoretical statistics analyzing the stability and robustness of affine-equivariant estimators. Formally, these estimators commute with affine transformations; i.e for any non-singular affine map $f: \R^d \to \R^d$, an affinely equivariant estimator $\wh{\mu}$ satisfies for any $\bm{X} \subset \R^d$, $\wh{\mu} (f(\bm{X})) = f \lprp{\wh{\mu} (\bm{X})}$ (see \cref{fig:af_eq_illus}). This intuitive property ensures that the results of an inferential procedure utilizing the estimator do not depend on the particular choice of scaling transformations applied to the data. The empirical mean is a prominent example of such an estimator but its performance is severely degraded when faced with heavy-tailed or adversarial data. A significant advance toward the construction of more robust estimators was made by Tukey \cite{tukeypicturing75}, who proposed the Tukey Median as a more robust estimate. The starting point of this estimator is the concept of a \emph{depth function}. Such a function measures how \emph{close to the center} a point is with respect to a set of points. For instance, the depth function corresponding to the Tukey median is defined as follows for $\bm{Y} = \{y_i\}_{i = 1}^n \subset \R$:
\begin{equation*}
    D^1_{\tau} (y; \bm{Y}) = \frac{1}{n}\min \lprp{\abs*{\lbrb{i: y_i \geq y}}, \abs*{\lbrb{i: y_i \leq y}}}.
\end{equation*}
The Tukey Median of a set of points $\bm{X} = \{x_i\}_{i = 1}^n \subset \R^d$ is a point that maximizes its minimum depth over all one-dimensional projections:
\begin{equation*}
    \wh{\mu}_{\tau} (\bm{X}) = \argmax D^d_\tau (x; \bm{X}) \text{ where } D^d_\tau (x; \bm{X}) = \min_{u \in \mb{S}^{d-1}} D^1_{\tau} \lprp{\inp{u}{x}; \lbrb{\inp{u}{x_i}}_{i = 1}^n}.
\end{equation*}
When $d = 1$, this reduces to the standard Median which is known to be robust (and scale-equivariant). The Tukey Median naturally generalizes this simple estimator to higher dimensions. Since then, the breakdown properties of the Tukey Median and other affine-equivariant estimators have been closely investigated by Maronna \cite{mrobustmest76} and Huber \cite{hubrobcov76}. These works concluded that the \emph{breakdown point} \cite{dhnotion83} of these well-known affine-equivariant estimators is at most $1 / (d+1)$. This somewhat disappointing discovery led to the search for estimators with improved breakdown properties. One of the first such approaches was the Stahel-Donoho estimator independently discovered by Stahel \cite{stahel1981} and Donoho \cite{donoho1982breakdown}. Here, one utilizes an alternative notion of \emph{outlyingness}:
\begin{equation*}
    D^1_{\mathrm{SD}} (y; \bm{Y}) = \frac{\abs{y - \med (\bm{Y})}}{\mad (\bm{Y})} \text{ where } \mad (\bm{Y}) = \med \lprp{\lbrb{\abs{y_i - \med (\bm{Y})}}_{i = 1}^n}.
\end{equation*}
The Stahel-Donoho estimate is a point with \emph{minimum} outlyingness:
\begin{equation*}
    \wh{\mu}_{\tau} (\bm{X}) = \argmin D^d_{\mathrm{SD}} (x; \bm{X}) \text{ where } D^d_{\mathrm{SD}} (x; \bm{X}) = \max_{u \in \mb{S}^{d-1}} D^1_{\mathrm{SD}} \lprp{\inp{u}{x}; \lbrb{\inp{u}{x_i}}_{i = 1}^n}
\end{equation*}
This estimator is known to have a breakdown point approaching $1/2$. However, all these approaches have significant drawbacks:
\begin{enumerate}
    \item There exist no quantitative bound on the performance of these estimators.
    \item Furthermore, the attainable bounds depend on the non-degeneracy of the dataset with error bounds growing arbitrarily large as the dataset approaches degeneracy. 
\end{enumerate}
Since the Stahel-Donoho estimator, numerous alternative approaches with differing notions of depth have been proposed: these include estimators based on the simplicial volume \cite{oja83}, S-estimation \cite{rousseeuwyohai84}, the minimum volume ellipsoid \cite{rousseeuw85}, and the simplicial depth \cite{liunotiondepth90}. In addition, the robustness \cite{mrobustmest76,yohai87,davies87,donoholiu88,lrbreakdown91,dsbreakdown92,liu92,lsorderingdirectional92,lps99} and consistency properties \cite{zuoprojdepth03,zchonstahel04} of these estimators have been studied. However, despite this interest, there exist no \emph{quantitative} accuracy guarantees in terms of the number of data points $n$, dimension $d$, failure probability $\delta$, and corruption fraction $\eta$ for these estimators.

\paragraph{Computationally Efficient Adversarially Robust Estimation:} The first \emph{computationally} efficient estimator (in \emph{Euclidean} norm) with near-optimal guarantees was proposed in a breakthrough result of Diakonikolas, Kamath, Kane, Li, Moitra, and Stewart \cite{diakonikolas2016robust}. This estimator is computable in polynomial time with polynomial sample complexity. The statistical and computational complexity has been substantially improved in follow-up works \cite{cdg,dhl} resulting in estimators with near-optimal statistical and computational performance. We direct the interested reader to the excellent survey \cite{diakonikolas2019recent} for more applications of these ideas.

\paragraph{Heavy-tailed Estimation:} An alternative statistical model for outliers is the heavy-tailed corruption model. In this setting, minimal assumptions are made about the data generating distribution (for instance, the covariance of the data-generating distribution exists as opposed to stronger ones such as Gaussianity) and hence, outliers occur naturally as part of the data. This in contrast to an adversary maliciously corrupting the datapoints in the adversarial setting. Here, estimators such as the empirical mean remain \emph{consistent} but suffer from poor \emph{statistical} performance and the emphasis is on designing estimators which avoid this degradation. In one dimension, optimal estimators based on the median-of-means framework have been known (and independently discovered) in a series of classical works \cite{NemYud83,jerrum1986random,alon1999space}. Finding a corresponding \emph{high-dimensional} estimator remained open till the pioneering work of Lugosi and Mendelson \cite{lugosi2017sub} whose estimator achieves the optimal sub-Gaussian rate in the Euclidean setting. A computationally efficient estimator by Hopkins \cite{hopkins2018sub} with the same guarantees followed shortly after along with alternative approaches \cite{lmtrimmed}. Since then, these ideas have been improved and extended to numerous other settings leading to estimators with strong statistical and computational performance \cite{cfb,dllinear,llvz}. Some recent works have also focused on the strong connections between the heavy-tailed and adversarially robust settings yielding estimators simultaneously robust to both corruption models \cite{dllinear,lmtrimmed,hlz20,dkp20}. An alternative line of work has also incorporated privacy guarantees into these estimators \cite{lkko21,hkm22,kmv22,hkmn23}.

In this context, we compare our results to three recently developed estimators: the work of Depersin and Lecue \cite{ldstaheldonoho22}, the setting considered by Duchi, Haque and Kuditipudi \cite{dhk23} and Brown, Hopkins, and Smith \cite{bhs23} which in turn build upon approaches by Brown, Gaboardi, Smith, Ullman, and Zakynthinou \cite{bgsuz}, and the recent result of Lugosi and Mendelson \cite{lmdirectiondependent20}. Depersin and Lecue \cite{ldstaheldonoho22} consider the Stahel-Donoho estimator and show that it achieves sub-Gaussian statistical performance. However, their approach requires strong assumptions on the data-generating process where a multiplicative approximation to $\Sigma$ may essentially be estimated from the data samples. On the other hand, in \cite{bgsuz}, the authors construct affine-equivariant estimators with sub-Gaussian error and strong privacy guarantees with subsequent work \cite{dhk23,bhs23} achieving computational efficiency. However, these works also require the existence of higher order-moments (beyond merely, the existence of the covariance matrix) of the distribution. Finally, sub-Gaussian estimators with direction-dependent accuracy are developed in \cite{lmdirectiondependent20}. However, these bounds scale with the expected \emph{Euclidean} deviation of a sample from its mean which when evaluated in the \emph{Mahalanobis} norm could be arbitrarily large. In addition, this estimator also requires additional assumptions for provable guarantees.
\section{Intuition}
\label{sec:overview}

In this section, we provide some intuition for our estimator. We analyze the performance of two prominent affine-equivariant estimators: the Tukey Median and the Stahel-Donoho estimator. We consider a simple setting where both these estimators perform poorly. We then formally present our estimator and describe how it addresses the shortcomings of these two approaches. We defer the rigorous analysis of our estimator to subsequent sections.

Our hard example will essentially be the simple uniform distribution over the standard basis vectors and the origin $\{e_i\}_{i = 1}^d \cup \{\bm{0}\}$. However, we will assume one of the standard basis vectors (say $e_1$) is mildly more likely to be observed. Formally, the distribution is defined for parameter $\gamma$ as follows:
\begin{equation*}
    \Pr_{X \thicksim D_{\gamma}} \lbrb{X = x} = 
    \begin{cases}
        \frac{1}{d + 1} + \gamma & \text{if } x = e_1 \\
        \frac{1}{d + 1} - \frac{\gamma}{d} & \text{otherwise} 
    \end{cases}.
\end{equation*}
The example is illustrated in $3$ dimensions in \cref{fig:hard_ex}. 
\begin{figure}
    \caption{Illustration of hard distribution. The red dot on $e_1$ denotes higher probability.}
    \label{fig:hard_ex}
    \centering
    \begin{tikzpicture}[scale = 0.8]
        \draw[thick,-latex] (0,0) -- (5,0);
        \draw[thick,-latex] (0,0) -- (0,5);
        \draw[thick,-latex] (0,0) -- (-2.5,-2.5);

        \node[color = red, opacity = 1] at (4, 0) {\small{\textbullet}};
        \node[color = blue, opacity = 1] at (0, 4) {\small{\textbullet}};
        \node[color = blue, opacity = 1] at (0, 0) {\small{\textbullet}};
        \node[color = blue, opacity = 1] at (-1.034, -1.034) {\small{\textbullet}};
    \end{tikzpicture}
\end{figure}
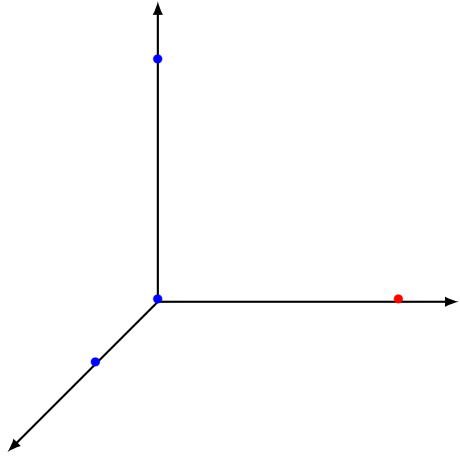

We also assume for the sake of simplicity that the estimators are run directly on the distribution itself with distributional analogues of the corresponding depth and outlyingness functions as opposed to samples from the distribution. We start with the Tukey median and establish that $e_1$ is the unique point with largest Tukey depth. Notice that the depth of $e_1$ is $1 / (d + 1) + \gamma$. Let the support of the distribution be $S$. For any point not in the convex hull of $S$, the separating hyperplane theorem ensures that they have Tukey depth $0$. Now, consider the case where $x$ belongs to the convex hull and $x \neq e_1$. We consider the two possibilities $x \in T$ where $T = S \setminus \{e_1\}$ and $x \notin T$ separately. First, let $x \in T$ and consider the vector $v = x - \bm{1} \lbrb{x = \bm{0}} \bm{1}$. We have $\inp{x}{v} > \inp{y}{v}$ for all $y \in S \setminus x$. Hence, the depth of $x$ is at most $1 / (d + 1) - \frac{\gamma}{d}$. Secondly, consider the alternative case when $x \notin S$ (but lies in its convex hull). We must have:
\begin{equation*}
    x = \sum_{y \in S} w_y y \text{ where } \sum_{y \in S} w_y = 1.
\end{equation*}
Furthermore, since $x \notin S$ and $x \neq e_1$, there exists $y \in T$ with $0 < w_y < 1$. Consider such a vector $y$ and the vector $v = y - \bm{1} \lbrb{y = \bm{0}} \bm{1}$. We now get:
\begin{equation*}
    \forall z \in S \setminus y: \inp{v}{y} > \inp{v}{x} > \inp{v}{z}.
\end{equation*}
Since $y \neq e_1$, the depth of $x$ is also at most $1 / (d + 1) - \frac{\gamma}{d}$. The previous two cases establish that $e_1$ is the unique point of maximum Tukey depth. Unfortunately, the error of $e_1$ is rather large. Consider the one-dimensional projection of the distribution, denoted $D^1_\gamma$, onto $e_1$:
\begin{equation*}
    \Pr_{Y \thicksim D_\gamma^1} \lbrb{Y = y} = 
    \begin{cases}
        \frac{1}{d + 1} + \gamma &\text{if } y = 1 \\
        \frac{d}{d + 1} - \gamma &\text{if } y = 0
    \end{cases}.
\end{equation*}
By considering the error along $e_1$ using \cref{lem:one_d_proj}, we get for $\mu_\gamma = \mu (D_\gamma)$ for $\gamma \leq 1 / (10d)$:
\begin{equation*}
    \norm{e_1 - \mu_\gamma}_{\Sigma (D_\gamma)} \ge \frac{d/(d+1) - \gamma}{\sqrt{\lprp{1 / (d + 1) + \gamma}\lprp{d/(d+1) - \gamma}}} = \sqrt{\frac{d/(d+1) - \gamma}{1 / (d + 1) + \gamma}}  \geq \frac{\sqrt{d}}{2}.
\end{equation*}
As we will see later, this error is larger than optimal by a $\sqrt{d}$ factor. However, notably, the Tukey median can be shown to exist for \emph{any} set of data points. The main drawback of the Tukey median is that it remains insensitive to the \emph{variance} along different directions. As illustrated in \cref{fig:hard_ex_one_d}, the true mean (along $e_1$) lies at $\frac{1}{(d + 1)} + \gamma$ while the Tukey estimate projects to $1$. The variance along $e_1$ is also at most $E[\inp{e_1}{X}^2] = \frac{1}{(d + 1)} + \gamma$. Therefore, incorporating variance information into the estimator can help mitigate some of this degradation.
\begin{figure}
    \centering
    \caption{One dimensional projection onto $e_1$.}
    \label{fig:hard_ex_one_d}
    \begin{tikzpicture}[scale=1]
        \draw[thick,-latex] (0,0) -- (5,0) node [anchor = west] {$e_1$};

        \node[color = blue!50!black] at (0, 0) {\textbullet};
        \node at (0, 0)[anchor = north] {$0$};

        \node[color = red!50!black] at (4, 0) {\textbullet};
        \node at (4, 0)[anchor = north] {$1$};

        \draw[very thick, color = red!50!black] (0.5, 0.1) -- (0.5, -0.1);
        \node at (0.5, 0)[anchor = south] {$\frac{1}{(d + 1)} + \gamma$};
    \end{tikzpicture}
\end{figure}
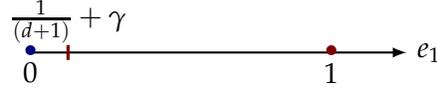

The Stahel-Donoho estimator attempts to incorporate such variance information. However, analyzing the estimator requires non-degeneracy assumptions on the data and even after doing so, these do not provide any \emph{quantifiable} bounds on its performance. For our example, the Stahel-Donoho estimator is not even \emph{defined}. Consider the projection of the distribution onto the standard basis vectors $e_i$ and the all-ones $\bm{1}$ direction.
\begin{figure}
    \centering
    \caption{One dimensional projections onto $e_i$ and $\bm{1}$.}
    \label{fig:hard_ex_two_projs}
    \begin{subfigure}{0.48\textwidth}
    \centering
        \begin{tikzpicture}[scale=1]
            \draw[thick,-latex] (0,0) -- (5,0) node [anchor = west] {$e_i$};
    
            \node[color = blue!50!black] at (0, 0) {\textbullet};
            \node at (0, 0)[anchor = north] {$0$};
            \node at (0, 0)[anchor = south] {$\frac{d}{d + 1} - \gamma$};
    
            \node[color = red!50!black] at (4, 0) {\textbullet};
            \node at (4, 0)[anchor = north] {$1$};
            \node at (4, 0)[anchor = south] {$\frac{1}{d + 1} + \gamma$};
        \end{tikzpicture}
    \end{subfigure}
    \begin{subfigure}{0.48\textwidth}
        \centering
        \begin{tikzpicture}[scale=1]
            \draw[thick,-latex] (0,0) -- (5,0) node [anchor = west] {$\bm{1}$};
    
            \node[color = blue!50!black] at (0, 0) {\textbullet};
            \node at (0, 0)[anchor = north] {$0$};
            \node at (0, 0)[anchor = south] {$\frac{1}{d + 1} - \frac{\gamma}{d}$};
    
            \node[color = red!50!black] at (4, 0) {\textbullet};
            \node at (4, 0)[anchor = north] {$1$};
            \node at (4, 0)[anchor = south] {$\frac{d}{d + 1} + \frac{\gamma}{d}$};
        \end{tikzpicture}
    \end{subfigure}
\end{figure}
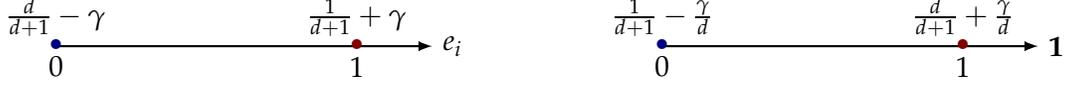
From the one-dimensional projections in \cref{fig:hard_ex_two_projs}, we have the following straightforward observations where $D^1_v$ denotes the projection of $D_\gamma$ onto $v$:
\begin{equation*}
    \med (D^1_{v}) = 
    \begin{cases}
        0 &\text{if } v = e_i \\
        1 &\text{if } v = \bm{1}
    \end{cases}
    \text{ and }
    \forall v \in \{e_i\}_{i = 1}^d \cup \{\bm{1}\}: \mad (D^1_{v}) = 0.
\end{equation*}
Consequently, for an estimate $x$ to have finite Stahel-Donoho outlyingness, it must satisfy $\inp{x}{e_i} = 0$ for all $i$ and $\inp{x}{\bm{1}} = 1$ which is a contradiction. 

Our previous discussion shows that the Tukey median and the Stahel-Donoho estimator fail in two complementary ways. The Tukey median is always defined for any set of data points but its failure to incorporate directional variances into its estimation procedure leads to large error. On the other hand, the variance estimates used in the Stahel-Donoho estimator may not allow for a well-defined estimate in certain settings and even when it is defined, existing analyses do not yield quantitative bounds on its performance. As we will see in \cref{sec:alg}, our estimator simultaneously addresses the shortcomings of both the Tukey median and the Stahel-Donoho estimator. Our median estimator accounts for directional variances like the Stahel-Donoho median but at the same time, is defined for \emph{any} collection of data points like the Tukey median.
\section{Algorithm}
\label{sec:alg}

In this section, we formally present our estimator. We demonstrate how it addresses the shortcomings of the Tukey median and the Stahel-Donoho estimator while simultaneously being well-defined for all point sets and accounting for directional variances. Our estimator is inspired by the Stahel-Donoho estimator but differs in how the robust location and scale parameters are estimated. Recall, the one-dimensional outlyingness function used by the Stahel-Donoho estimator:
\begin{equation*}
    D^1_{\mathrm{SD}} (y; \bm{Y}) = \frac{\abs{y - \med (\bm{Y})}}{\mad (\bm{Y})} \text{ where } \mad (\bm{Y}) = \med \lprp{\lbrb{\abs{y_i - \med (\bm{Y})}}_{i = 1}^n}.
\end{equation*}
The location parameter is robustly estimated by the median and the scale by the mean-absolute deviation ($\mad$) of the one-dimensional point set. The key point of difference is a novel location and scale estimation procedure. Defining for a subset $S \subset [n]$:
\begin{equation*}
    \mu_S (\bm{Y}) \coloneqq \frac{1}{\abs{S}} \sum_{i \in S} y_i \text{ and } \sigma_{1, S} (\bm{Y}) \coloneqq \frac{1}{\abs{S}} \sum_{i \in S} \abs{y_i - \mu_S (\bm{Y})},
\end{equation*}
our location and scale estimates are obtained as follows where $\nu = c / d$ for small constant $c > 0$:
\begin{enumerate}
    \item First, find $S$ satisfying $\abs{S} \geq (1 - \nu) n$ that minimizes $\sigma_{1, S} (Y)$.
    \item Second, define location estimate $\wt{\mu} (\bm{Y}) \coloneqq \mu_S (\bm{Y})$ and scale estimate $\wt{\sigma} (\bm{Y}) \coloneqq \sigma_{1, S} (\bm{Y})$.
\end{enumerate}
With these one-dimensional location and scale estimates, our estimate is defined below:
\begin{gather*}
    D^1_\ours (y; \bm{Y}) = \frac{\abs{y - \wt{\mu} (\bm{Y})}}{\wt{\sigma} (\bm{Y})} \\
    \wh{\mu}_{\ours} (\bm{X}) = \argmin D^d_{\ours} (x; \bm{X}) \text{ where } D^d_\ours (x; \bm{X}) \coloneqq \max_{v \in \mb{S}^{d-1}} D^1_\ours \lprp{\inp{x}{v}; \{\inp{x_i}{v}\}_{i = 1}^n}
\end{gather*}
We show that with these definitions, our estimate \emph{always} exists for \emph{any} point set with finite outlyingness. In fact, we establish the following \emph{strengthening} of this statement:
\begin{enumerate}
    \item Firstly, we show that the estimate always has \emph{constant} outlyingness. This allows us to prove sharp quantitative bounds in our settings of interest.
    \item Secondly, while our definition technically requires choosing $S$ to minimize the directional scale estimate, we show that there exists an estimate with finite outlyingness for \emph{all} choices of $S$ satisfying the size constraints for every direction $v$.
\end{enumerate}

This median is defined in \cref{alg:gentuk} and the proof of its existence is provided in \cref{thm:gen_tuk_existence}. The proof relies on Helly's Theorem (\cref{thm:helly}), a fundamental result in convex geometry.

\begin{algorithm}[H]
\caption{High-dimensional Median}
\label{alg:gentuk}
\begin{algorithmic}[1]
    \STATE \textbf{Input}: Point set $\bm{X} = \{x_i\}_{i = 1}^k \subset \R^d$
    \STATE Let $\nu = 1 / (3  d)$ and $\mc{S} = \{S \subset [k]: \abs{S} \geq (1 - \nu) k\}$
    \STATE Define for all $v\in \mb{S}^{d - 1}, S \in \mc{S}$:
    \begin{gather*}
        \mu_{v, S} = \mu_S \lprp{\lbrb{\inp{v}{x_i}}_{i = 1}^k} \qquad \sigma_{v, S} = \sigma_{1, S} \lprp{\lbrb{\inp{v}{x_i}}_{i = 1}^k}
    \end{gather*}
    \STATE Define convex compact sets:
    \begin{equation*}
        T_{v, S} = \lbrb{x \in \R^d: \abs{\inp{x}{v} - \mu_{v,S}} \leq 2 \sigma_{v,s}} \cap \conv(\bm{X})
    \end{equation*}
    \STATE Let $T = \cap_{v \in \mb{S}^{d - 1}, S \in \mc{S}} T_{v, S}$
    \STATE \textbf{Return: } $\mu (T)$
\end{algorithmic}
\end{algorithm}

For a point set $\bm{X} = \{x_i\}_{i = 1}^n \subset \R^d$, let $\wh{\mu} (\bm{X})$ denote the output of \cref{alg:gentuk}. The main result of this section establishes the existence of this estimator. 
\begin{theorem}
    \label{thm:gen_tuk_existence}
    For any $k \in \N$ and $\bm{X} = \{x_i\}_{i = 1}^k \subset \R^d$, $\wh{\mu} (\bm{X})$ is well defined and affine-equivariant.
\end{theorem}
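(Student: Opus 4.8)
The statement has two parts: that $\wh{\mu}(\bm{X}) = \mu(T)$ is \emph{well-defined}, and that it is \emph{affine-equivariant}. Since the output applies $\mu(\cdot)$ to $T = \bigcap_{v \in \mb{S}^{d-1},\, S \in \mc{S}} T_{v,S}$ and each $T_{v,S}$ — a closed slab intersected with the compact polytope $\conv(\bm{X})$ — is convex and compact, well-definedness reduces to showing $T \neq \emptyset$: once $T$ is a nonempty compact convex set, the uniform distribution over it (taken with respect to the appropriate measure on its affine hull) and hence its mean $\mu(T)$ make sense. I expect the nonemptiness of $T$ to be the heart of the argument, with equivariance following by a direct unwinding of definitions.

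For nonemptiness I would invoke Helly's theorem (\cref{thm:helly}): since the $T_{v,S}$ are compact, it suffices to show that any $m \le d+1$ of them have a common point (the infinitude of the direction set $\mb{S}^{d-1}$ is then absorbed by the finite-intersection property of compact sets). So fix $v_1,\dots,v_m \in \mb{S}^{d-1}$ and $S_1,\dots,S_m \in \mc{S}$, let $W := \bigcap_{j=1}^m S_j$, and set $z := \tfrac{1}{\abs{W}}\sum_{i\in W} x_i$. Then $z \in \conv(\bm{X})$ automatically, so it remains to check $\abs{\inp{z}{v_j} - \mu_{v_j,S_j}} \le 2\sigma_{v_j,S_j}$ for each $j$. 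Writing $\Delta_j := \abs{\inp{z}{v_j} - \mu_{v_j,S_j}}$ and $\alpha_j := \abs{S_j\setminus W}/\abs{S_j}$, one expands $\inp{z}{v_j} - \mu_{v_j,S_j} = \tfrac{1}{\abs{S_j}}\sum_{i \in S_j\setminus W}(\inp{z}{v_j} - \inp{x_i}{v_j})$ (the $W$-terms cancel), applies the triangle inequality, and uses $\sum_{i\in S_j\setminus W}\abs{\inp{x_i}{v_j} - \mu_{v_j,S_j}} \le \abs{S_j}\,\sigma_{v_j,S_j}$ to get the self-bounding bound $(1-\alpha_j)\Delta_j \le \sigma_{v_j,S_j}$, i.e.\ $\Delta_j \le \sigma_{v_j,S_j}/(1-\alpha_j)$. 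The choice $\nu = 1/(3d)$ now does the work: each $S_{j'}$ omits at most $\lfloor \nu k\rfloor \le k/(3d)$ indices, so $S_j\setminus W = \bigcup_{j'\neq j}(S_j\setminus S_{j'})$ has size at most $d\cdot k/(3d) = k/3$, while $\abs{S_j}\ge (1-\nu)k \ge 2k/3$, giving $\alpha_j \le (k/3)/(2k/3) = 1/2$ and hence $\Delta_j \le 2\sigma_{v_j,S_j}$ — exactly the slab half-width in \cref{alg:gentuk}. The same count gives $\abs{W} \ge k - (d+1)\lfloor\nu k\rfloor \ge (1 - \tfrac{d+1}{3d})k \ge k/3 > 0$, so $z$ is valid, the Helly hypothesis holds, and $T \neq \emptyset$.

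I expect this Helly step — producing the right common point for $m$ slabs — to be the main obstacle. The naive candidate (a data point of $\bm{X}$ lying in all the slabs) fails, because Markov's inequality only places \emph{half} of each $S_j$ inside $T_{v_j,S_j}$, so a union bound over $d+1$ slabs is hopeless. The fix is to use that $T_{v,S}$ contains the whole slab's intersection with $\conv(\bm{X})$, and to take the \emph{mean} $z$ of $W = \bigcap_j S_j$: its projection onto $v_j$ stays within $2\sigma_{v_j,S_j}$ of $\mu_{v_j,S_j}$ not because its points do, but because deleting only the $\lesssim k/(3d)$ indices in $S_j\setminus W$ moves the mean of $S_j$ by at most $\sigma_{v_j,S_j}/(1-\alpha_j)$. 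Getting $\alpha_j \le 1/2$ is exactly why $\nu$ must scale as $1/d$, and the factor $2$ in the slab width is exactly what makes $\sigma_{v_j,S_j}/(1-\alpha_j) \le 2\sigma_{v_j,S_j}$.

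For affine-equivariance, let $f(x) = Ax + b$ with $A$ invertible. A direct computation gives $\mu_{v,S}(f(\bm{X})) = \mu_{A^\top v,S}(\bm{X}) + \inp{b}{v}$ and $\sigma_{v,S}(f(\bm{X})) = \sigma_{A^\top v,S}(\bm{X})$; together with $\conv(f(\bm{X})) = f(\conv(\bm{X}))$ and the fact that $T_{v,S}$ depends only on the line through $v$ (rescaling $v$ scales $\mu_{v,S}$ and $\sigma_{v,S}$ by the same factor, leaving the slab unchanged), this yields $T_{v,S}(f(\bm{X})) = f\bigl(T_{A^\top v/\norm{A^\top v},\,S}(\bm{X})\bigr)$. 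Since $v\mapsto A^\top v/\norm{A^\top v}$ is a bijection of $\mb{S}^{d-1}$ and $f$ commutes with intersections (being a bijection of $\R^d$), intersecting over all $(v,S)$ gives $T(f(\bm{X})) = f(T(\bm{X}))$. Finally $\mu(\cdot)$ itself is affine-equivariant — the pushforward of the uniform distribution on $T$ under $f$ is the uniform distribution on $f(T)$, and expectation commutes with affine maps — so $\wh{\mu}(f(\bm{X})) = \mu(f(T(\bm{X}))) = f(\mu(T(\bm{X}))) = f(\wh{\mu}(\bm{X}))$, completing the proof.
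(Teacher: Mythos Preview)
Your proof is correct and follows essentially the same route as the paper: Helly's theorem with the mean of $W=\bigcap_j S_j$ as the common point for any $d+1$ slabs, and $T(f(\bm{X}))=f(T(\bm{X}))$ for equivariance. The only cosmetic difference is that the paper bounds the deviation more directly as $\abs{\inp{z}{v_j}-\mu_{v_j,S_j}}\le \tfrac{1}{\abs{W}}\sum_{i\in W}\abs{\inp{x_i}{v_j}-\mu_{v_j,S_j}}\le \tfrac{\abs{S_j}}{\abs{W}}\sigma_{v_j,S_j}\le 2\sigma_{v_j,S_j}$ (using $\abs{W}\ge k/2$), bypassing your self-bounding step.
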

\begin{proof}
    We tackle the two claims of the theorem in turn.

    \paragraph{Existence of $\wh{\mu}$:} We first show that $T$ is non-empty, convex, and compact implying the first claim. Note that $T$ is the intersection of compact convex sets and is hence, convex and compact. To establish the non-emptiness of $T$, an application of Helly's Theorem (\cref{thm:helly}) allows us to restrict to intersections of at most $d + 1$ of the sets $T_v$. Consider any $d + 1$ sized collection $H = \{v_j, S_j\}_{j \in [d + 1]}$. We have:
    \begin{equation*}
        R := \cap_{j \in [d + 1]} S_{j}, \abs{R} \geq (1 - (d + 1) \nu)k \geq \frac{k}{2}.
    \end{equation*}
    Defining:
    \begin{equation*}
        \mu_R = \frac{1}{\abs{R}} \cdot \sum_{i \in R} x_i,
    \end{equation*}
    we will show that $\mu_R$ lies in $\cap_{(v, S) \in H} T_{v, S}$. For any $(v, S) \in H$, we have:
    \begin{align*}
        \abs{\inp{\mu_R}{v} - \mu_{v, S}} &= \abs*{\frac{1}{\abs{R}} \sum_{i \in R} (\inp{x_i}{v} - \mu_{v, S})} \leq \frac{1}{\abs{R}} \cdot \sum_{i \in R} \abs*{\inp{x_i}{v} - \mu_{v, S}} \leq \frac{\abs{S}}{\abs{R}} \sigma_{v, S}\leq 2\sigma_{v, S}.
    \end{align*}
    An application of Helly's theorem now establishes that $T$ is non-empty proving the theorem.

    \paragraph{Affine-equivariance of $\wh{\mu}$:} Next, we establish that $\wh{\mu}$ is affine-equivariant. Let $\bm{X} = \{x_i\}_{i = 1}^k \subset \R^{d}$ and $f(x) = Ax + b$ with $A \in \R^{d \times d}, b \in \R^d$ and $A$ be non-singular. Hence, $f$ is an invertible affine transformation. Furthermore, let $\bm{X}^\prime = f(\bm{X}) = \{x_i^\prime = f(x_i)\}_{i = 1}^k$ and $T$ be the set obtained in \cref{alg:gentuk} on input $\bm{X}$ and $T^\prime$ be the corresponding set on $f(\bm{X}^\prime)$. We will show $T^\prime = f(T)$ proving the second claim. 
    
    First, let $x \in T$ and we prove $f(x) \in T^\prime$. Observe for any $v \in \mb{S}^{d - 1}, i \in [k]$:
    \begin{align*}
        \inp{v}{x^\prime_i} = \inp{v}{f(x_i)} &= \inp{v}{A x_i} + \inp{v}{b} = \inp{v}{A x_i} + \inp{v}{b} \\
        &= \inp{A^\top v}{x_i} + \inp{v}{b} = \norm{A^\top v}\inp*{\frac{A^\top v}{\norm{A^\top v}}}{x_i} + \inp{v}{b}.
    \end{align*}
    We have by defining $v^\prime = \frac{A^\top v}{\norm{A^\top v}}$ for any $S \subset [k]$ with $\abs{S} \geq (1 - \nu)k$:
    \begin{gather*}
        \mu \lprp{\{\inp{v}{x^\prime_i}\}_{i \in S}} = \norm{A^\top v} \cdot \mu \lprp{\{\inp{v^\prime}{x_i}\}_{i \in S}} + \inp{v}{b} \\
        \sigma_1 \lprp{\{\inp{v}{x^\prime_i}\}_{i \in S}} = \norm{A^\top v} \cdot \sigma_1 \lprp{\{\inp{v^\prime}{x_i}\}_{i \in S}}.
    \end{gather*}
    As a consequence, we get:
    \begin{align*}
        \abs{\inp{v}{Ax + b} - \mu \lprp{\{\inp{v}{x^\prime_i}\}_{i \in S}}} &= \norm{A^\top v} \cdot \abs*{\inp{v^\prime}{x} - \mu \lprp{\{\inp{v^\prime}{x_i}\}_{i \in S}}} \\
        &\leq 2 \cdot \norm{A^\top v} \cdot \sigma_1 \lprp{\{\inp{v^\prime}{x_i}\}_{i \in S}} = 2\sigma_1 \lprp{\{\inp{v}{x^\prime_i}\}_{i \in S}}
    \end{align*}
    where the inequality follows from $x \in T$. Since, the above holds for all $x \in T, v \in \mb{S}^{d - 1}, S \subset [k]$ with $\abs{S} \geq (1 - \nu)k$, we get that $f(T) \subseteq T^\prime$. By repeating the above argument for $f^{-1} (z) = A^{-1} z - A^{-1} b$, we get that $f^{-1} (T^\prime) \subset T$ which implies $T^\prime \subset f(T)$ concluding the proof.
\end{proof}
\section{Theoretical Guarantees} 
\label{sec:ub}

Here, we prove \cref{thm:ub_afeq} using the high-dimensional median described in the previous section. Our estimator achieving the guarantees of \cref{thm:ub_afeq} is defined in \cref{alg:est_afeq}. Note that since our high-dimensional median is affine-equivariant (\cref{thm:gen_tuk_existence}), so is \cref{alg:est_afeq}. Hence, it suffices to establish \cref{thm:ub_afeq} in the setting $\mu = 0$ and $\Sigma = I$.
\begin{algorithm}[H]
    \caption{Affine-equivariant Estimator}
    \label{alg:est_afeq}
    \begin{algorithmic}[1]
        \STATE \textbf{Input}: Point set $\bm{X} = \{x_i\}_{i = 1}^n \subset \R^d$, Confidence Parameter $\delta$
        \STATE $k \gets \max (6 \eta d n, C d \log (1 / \delta))$
        \STATE Partition $\bm{X}$ into $k$ equally sized buckets $\{\mc{B}_i\}_{i \in [k]}$
        \STATE Compute $\wh{\mu}_i = \mu (\mc{B}_i)$
        \STATE $\wh{\mu} = \text{High-dimensional Median} (\{\wh{\mu}_i\}_{i \in [k]})$
        \STATE \textbf{Return:} $\wh{\mu}$
    \end{algorithmic}
\end{algorithm}

We first prove the following technical lemma which establishes the required concentration properties on the bucketed means, $\wh{\mu}_i$'s. Before we proceed, we define the thresholding operator for a threshold $\tau \geq 0$ as follows where $\sgn(\cdot)$ denotes the signum function:
\begin{equation*}
    \psi_\tau (x) = 
   \begin{cases}
       x & \text{if } \abs{x} \leq \tau \\
       \sgn(x) \tau & \text{otherwise}
   \end{cases}.
\end{equation*}
\begin{lemma}
    \label{lem:conc_afeq}
    There exists an absolute constant $C > 0$ such that the following holds. Let $Y_1, \dots, Y_k$ be $k$ i.i.d. random vectors drawn from a distribution $D$ with mean $0$ and variance $\sigma^2 I$. Then, we have for $\tau = 24\sigma d$:
    \begin{equation*}
        \max_{v \in \mb{S}^{d-1}} \frac{1}{k} \sum_{i = 1}^k \abs{\psi_\tau (\inp{v}{Y_i})} \leq 2\sigma
    \end{equation*}
    with probability at least $1 - \delta$ when $k \geq C d \log (1 / \delta)$. 
\end{lemma}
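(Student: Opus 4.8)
The plan is to treat the left-hand side as the supremum of an empirical process indexed by $\mb{S}^{d-1}$ and to control it by combining (i) a bound on its expectation via symmetrization and the contraction principle with (ii) a Talagrand-type concentration inequality for empirical-process suprema. Rescaling $Y_i \mapsto Y_i/\sigma$ reduces us to $\sigma = 1$ and $\tau = 24d$; writing $f_v(y) \coloneqq \abs{\psi_\tau(\inp{v}{y})} = \min(\abs{\inp{v}{y}}, \tau) \in [0,\tau]$ and $Z \coloneqq \sup_{v \in \mb{S}^{d-1}} \frac1k \sum_{i=1}^k f_v(Y_i)$, the goal becomes $Z \le 2$ with probability at least $1-\delta$.

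First I would bound $\mathbb{E} Z$. Since $\mathbb{E} f_v(Y) \le \mathbb{E}\abs{\inp{v}{Y}} \le (\mathbb{E}\inp{v}{Y}^2)^{1/2} = 1$ for every unit $v$, it suffices to bound the centered supremum $\mathbb{E}\sup_v \frac1k\sum_i (f_v(Y_i) - \mathbb{E} f_v(Y))$. Symmetrization followed by the Ledoux–Talagrand contraction principle — legitimate because $t \mapsto \min(\abs{t},\tau)$ is $1$-Lipschitz and vanishes at $0$ — reduces this, up to an absolute constant, to the linear Rademacher average $\frac1k \mathbb{E}\norm{\sum_i \epsilon_i Y_i} \le \frac1k\big(\sum_i \mathbb{E}\norm{Y_i}^2\big)^{1/2} = \sqrt{d/k}$, using $\mathbb{E}\norm{Y_i}^2 = \Tr(I) = d$ and Jensen. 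Hence $\mathbb{E} Z \le 1 + O(\sqrt{d/k})$, which is at most $5/4$ once $k \ge C_0 d$ for a suitable absolute constant $C_0$.

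For the high-probability statement I would apply Talagrand's inequality for suprema of empirical processes, in its Bernstein-type (Bousquet) form, to $W \coloneqq \sup_v \sum_i (f_v(Y_i) - \mathbb{E} f_v(Y))$: this gives $\Pr[W \ge \mathbb{E} W + t] \le \exp\big(-t^2/(2\nu + \tfrac{2}{3}bt)\big)$ with envelope parameter $b = \tau = 24d$ and $\nu = k\sup_v\mathrm{Var}(f_v(Y)) + 2b\,\mathbb{E} W$. The two estimates that make this work are $\sup_v\mathrm{Var}(f_v(Y)) \le \sup_v\mathbb{E}\inp{v}{Y}^2 = 1$ and $\mathbb{E} W = k\cdot O(\sqrt{d/k}) = O(\sqrt{kd})$ from the previous step, so $\nu = O(k + d\sqrt{kd})$. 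Since $Z \le W/k + \sup_v\mathbb{E} f_v(Y) \le W/k + 1$, it suffices to force $W \le k$; choosing $t = k - \mathbb{E} W \ge k/2$ (valid once $k \ge C_0 d$, since then $\mathbb{E} W \le k/2$) and using $d \le k$ to see that $\nu$ and $bt$ are both $O(dk)$, the exponent is $\Omega(k/d)$, so $\Pr[Z > 2] \le \exp(-\Omega(k/d)) \le \delta$ whenever $k \ge C d\log(1/\delta)$.

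I expect the main obstacle to be exactly the choice of concentration tool. A direct $\epsilon$-net union bound does \emph{not} suffice: each summand $f_v(Y_i)$ has range $\tau = \Theta(d)$, so a Bernstein estimate at a single direction only controls deviations at scale $\sqrt{\tau/k}$, and after union-bounding over a net of $\mb{S}^{d-1}$ fine enough to also control the Lipschitz fluctuation of $v \mapsto f_v$ (which needs spacing $\Omega(1/\sqrt d)$), one is forced to take $k = \Omega(d^2)$. The Talagrand/Bousquet bound circumvents this because its leading term is governed by the variance proxy $\sup_v\mathrm{Var}(f_v(Y)) \le 1$, which is independent of $d$ (it comes from $\mathbb{E}\inp{v}{Y}^2 = 1$), while the range $b = \tau = \Theta(d)$ enters only through the lower-order term $\tfrac{2}{3}bt$ — and that is precisely what trades $d^2\log(1/\delta)$ for $d\log(1/\delta)$. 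Verifying the hypotheses of the contraction principle and of Bousquet's inequality (separability of the index set, centering, exact universal constants) should be routine.
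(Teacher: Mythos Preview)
Your proposal is correct and follows essentially the same route as the paper: bound the expectation of the centered supremum via symmetrization and the Ledoux--Talagrand contraction principle (exploiting that $t\mapsto\min(\abs{t},\tau)$ is $1$-Lipschitz and vanishes at $0$), then apply Bousquet's inequality with the wimpy variance controlled by $\sup_v \E\inp{v}{Y}^2 = \sigma^2$ and the envelope by $\tau = 24\sigma d$. The only cosmetic differences are that the paper normalizes by $\tau$ before invoking Bousquet (to make the summands bounded by $1$) and sets the deviation parameter to $t=\sigma/2$ rather than $t\approx k/2$, but the resulting exponent is $\Omega(k/d)$ in both cases.
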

\begin{proof}
    Define the random variable $Z$ as follows with $Y$ drawn from $D$:
    \begin{equation*}
        Z = \max_{v \in \mb{S}^{d - 1}} \frac{1}{k} \sum_{i = 1}^k \abs{\psi_\tau\lprp{\inp{Y_i}{v}}} - \E \lsrs{\abs{\psi_\tau\lprp{\inp{Y}{v}}}}.
    \end{equation*}
    We first bound the expectation of $Z$ with $Y_i^\prime$ and $\gamma_i$ denoting i.i.d. vectors from $D$ and Rademacher random variables respectively. The fourth inequality follows from the Ledoux-Talagrand contraction principle (\cref{cor:ledtal}) and the observation that $\abs{\psi_\tau (\cdot)}$ is $1$-Lipschitz:
    \begin{align*}
        \E [Z] &\leq \E \lsrs{\max_{v \in \mb{S}^{d - 1}} \abs*{\frac{1}{k} \sum_{i = 1}^k \abs{\psi_\tau\lprp{\inp{Y_i}{v}}} - \E \lsrs{\abs{\psi_\tau\lprp{\inp{Y}{v}}}}}} \\
        &\leq \E \lsrs{\max_{v \in \mb{S}^{d - 1}} \abs*{\frac{1}{k} \sum_{i = 1}^k \abs{\psi_\tau\lprp{\inp{Y_i}{v}}} - \abs{\psi_\tau\lprp{\inp{Y_i^\prime}{v}}}}} \\
        &= \E \lsrs{\max_{v \in \mb{S}^{d - 1}} \abs*{\frac{1}{k} \sum_{i = 1}^k \gamma_i \lprp{\abs{\psi_\tau\lprp{\inp{Y_i}{v}}} - \abs{\psi_\tau\lprp{\inp{Y_i^\prime}{v}}}}}} \\
        &\leq 2 \E \lsrs{\max_{v \in \mb{S}^{d - 1}} \abs*{\frac{1}{k} \sum_{i = 1}^k \gamma_i \abs{\psi_\tau\lprp{\inp{Y_i}{v}}}}} \leq 4 \E \lsrs{\max_{v \in \mb{S}^{d - 1}} \abs*{\frac{1}{k} \sum_{i = 1}^k \gamma_i \inp{Y_i}{v}}} \\
        &= \frac{4}{k} \E \lsrs{\norm*{\sum_{i = 1}^{k} \gamma_i Y_i}} \leq \frac{4}{k} \sqrt{\E \lsrs{\norm*{\sum_{i = 1}^{k} \gamma_i Y_i}^2}} = 4 \sqrt{\frac{d}{k}} \sigma.
    \end{align*}
    Additionally, noting that $\psi_\tau (x) \leq \tau$ for all $x \in \R$:
    \begin{equation*}
        Y_{i, v} \coloneqq \frac{1}{\tau} \lprp{\abs{\psi_\tau (\inp{Y_i}{v})} - \E [\abs{\psi_\tau (\inp{Y}{v})}]} \leq 1.
    \end{equation*}
    Furthermore, we have for all $v \in \mb{S}^{d - 1}$:
    \begin{equation*}
        \sum_{i = 1}^k \E [Y_{i, v}^2] \leq \frac{k}{\tau^2} \E \lsrs{\psi_\tau\lprp{\inp{Y}{v}}^2} \leq \frac{k}{\tau^2} \E \lsrs{\inp{Y}{v}^2} = \frac{k \sigma^2}{\tau^2}.
    \end{equation*}
    Hence, we get by an application of Bousquet's inequality (\cref{thm:bousequet_thm}):
    \begin{equation*}
        \Pr \lbrb{Z \geq \E [Z] + t} \leq \exp \lprp{- \lprp{\frac{k}{\tau}}^2 \cdot \frac{t^2}{2(v + kt/(3\tau))}} \text{ where } v = \frac{8\sigma\sqrt{kd}}{\tau} + \frac{k\sigma^2}{\tau^2}.
    \end{equation*}
    Setting $t = \frac{\sigma}{2}$ and from our setting of $\tau$ and $k$, we get:
    \begin{equation*}
        Z \leq \sigma
    \end{equation*}
    with probability at least $1 - \delta$. The lemma now follows as:
    \begin{equation*}
        \E \lsrs{\abs{\psi_\tau\lprp{\inp{Y}{v}}}} \le \sqrt{\Ex \lsrs{\psi_\tau\lprp{\inp{Y}{v}}^2}} \le \sqrt{\Ex \lsrs{\inp{Y}{v}^2}} \le \sigma.
    \end{equation*}
\end{proof}

We now proceed to the proof of \cref{thm:ub_afeq}. For the sake of analysis let $\wt{\mu}_i$ denote the \emph{uncorrupted} versions of the bucketed means $\wh{\mu}_i$. For these, we have:
\begin{equation*}
    \E [\wt{\mu}_i] = 0 \qquad \text{ and } \qquad \E [\wt{\mu}_i \wt{\mu}_i^\top] = \frac{k}{n} I.
\end{equation*}
Hence, we get by \cref{lem:conc_afeq} and the setting of $k$ in \cref{alg:est_afeq}:
\begin{equation*}
    \forall \norm{v} = 1: \frac{1}{k} \sum_{i = 1}^k \abs{\psi_\tau (\inp{v}{\wt{\mu}_i})} \leq 2\wt{\sigma} \text{ where } \wt{\sigma} = \sqrt{\frac{k}{n}} \text{ and } \tau = 24 \wt{\sigma}d
\end{equation*}
with probability at least $1 - \delta$. We condition on this event in the remainder of the proof. Note, furthermore, that there are at most $\eta n$ many corrupted points in $\bm{X}$. Therefore, we have for $\abs{\{i: \wt{\mu}_i = \wh{\mu}_i\}} \geq (1 - 1 / (6d)) k$ again from the setting of $k$ in \cref{alg:est_afeq} and that:
\begin{equation*}
    \forall \norm{v} = 1: \frac{1}{k} \sum_{i = 1}^k \bm{1} \lbrb{\abs{\inp{v}{\wt{\mu}_i}} \geq \tau} \leq \frac{1}{\tau} \cdot \frac{1}{k} \sum_{i = 1}^k \abs{\psi_\tau (\inp{v}{\wt{\mu}_i})} \leq \frac{1}{12d}.
\end{equation*}
Therefore, we get from the previous two observations that:
\begin{equation*}
    \forall \norm{v} = 1: \abs{\mc{G}_v} \geq \lprp{1 - \frac{1}{4d}} k \text{ where } \mc{G}_v = \lbrb{i: \wt{\mu}_i = \wh{\mu}_i \text{ and } \psi_\tau (\inp{v}{\wt{\mu}_i}) = \inp{v}{\wt{\mu}_i}}.
\end{equation*}
Now, let $v \in \mb{S}^{d - 1}$. We have for $\mc{G}_v$ from \cref{alg:gentuk,thm:gen_tuk_existence}:
\begin{equation*}
    \abs{\inp{v}{\wh{\mu}} - \mu (\{\inp{v}{\wh{\mu}_i}\}_{i \in \mc{G}_v})} \leq 2 \sigma_1 (\{\inp{v}{\wh{\mu}_i}\}_{i \in \mc{G}_v}).
\end{equation*}
For the mean term, we get:
\begin{align*}
    \abs{\mu (\{\inp{v}{\wh{\mu}_i}\}_{i \in \mc{G}_v})} & \leq \mu (\{\abs{\inp{v}{\wh{\mu}_i}}\}_{i \in \mc{G}_v}) \leq \frac{1}{(1 - 1 / (4d))} \mu (\{\abs{\psi_\tau(\inp{v}{\wt{\mu}_i})}\}_{i \in [k]}) \leq 3 \wt{\sigma}.
\end{align*}

For the deviation term, we get:
\begin{align*}
    \sigma_1 (\{\inp{v}{\wh{\mu}_i}\}_{i \in \mc{G}_v}) &= \mu (\{\abs{\inp{v}{\wh{\mu}_i} - \mu(\{\inp{v}{\wh{\mu}_i}\}_{i \in \mc{G}_v}) }\}_{i \in \mc{G}_v}) \leq \mu (\{\abs{\inp{v}{\wh{\mu}_i}}\}_{i \in \mc{G}_v}) + 3\wt{\sigma} \leq 6\wt{\sigma}.
\end{align*}
The above two bounds imply:
\begin{equation*}
    \forall v \in \mb{S}^{d - 1}: \abs{\inp{v}{\wh{\mu}}} \leq 15\wt{\sigma} = 15 \sqrt{\frac{k}{n}}
\end{equation*}
establishing the theorem. \qed

We conclude the section with the following remarks:

For clarity and ease of exposition, we impose a minor constraint that $\eta \in [0, 1/(6d)]$. The analysis reveals that the results are applicable for any $\eta \le c/(d+1)$ with any constant $c < 1$. Moreover, in the absence of adversarial contamination or bucketing, our analysis yields an $O(1)$ upper bound in the distributional setting. This stands in contrast with the $\Omega(\sqrt{d})$ lower bound of Tukey median, as elucidated in \cref{sec:overview}.
\section{Lower Bounds}
\label{sec:lb}

Here, we present the proofs of \cref{thm:lb_ht_afeq,thm:lb_rob_breakdown_afeq,thm:lb_rob_quant_afeq} which show that the guarantees of \cref{thm:ub_afeq} are nearly tight. For the heavy-tailed setting with \emph{no} adversarial contamination (i.e $\eta = 0$), \cref{thm:lb_ht_afeq,thm:lb_rob_breakdown_afeq,thm:lb_rob_quant_afeq} shows that the recovery error of our estimator is optimal up to a $\sqrt{\log (d)}$ factor and for the adversarial contamination model, \cref{thm:lb_rob_breakdown_afeq,thm:lb_rob_quant_afeq} establishes that \emph{no} affinely-equivariant estimator can achieve breakdown point greater than $1 / (d + 1)$ and that $O(\sqrt{d\eta})$ is the best achievable recovery error for \emph{any} affine-equivariant estimator.

\subsection{Heavy-tailed Lower Bound - Proof of \cref{thm:lb_ht_afeq}}

To define our class of distributions, let:
\begin{equation*}
    \eps = \frac{1}{4}\sqrt{\frac{d \log (1 / (d\delta))}{n \log (d)}}.
\end{equation*}
Our hard class will contain $d$ distributions with support over the standard basis vectors and the origin, i.e., $\{e_i\}_{i = 1}^d \cup \lbrb{\bm{0}}$ such that each distribution puts a smaller mass at one of the standard basis vectors. More formally, we have $\mc{D} = \{D_i\}_{i = 1}^{d}$ with:
\begin{equation*}
    \Pr_{X \thicksim D_i} \lbrb{X = e_j} = 
    \begin{cases}
        \frac{\eps^2}{d} & \text{if } i \neq j \\
        \frac{\eps^2}{d^2} & \text{if } i = j \\
    \end{cases},       
\end{equation*}
and 
\begin{equation*}
    \Pr_{X \sim D_i} \lbrb{X = \bm{0}} = 1 - \frac{d-1}{d}\eps^2 - \frac{\eps^2}{d^2}.
\end{equation*}
By a straightforward calculation, we have:
\begin{equation*}
    \Sigma (D_i) \preccurlyeq M^i \text{ where } M^i_{j k} = 
    \begin{cases}
        0 & \text{if } j \neq k \\
        \frac{\eps^2}{d} & \text{if } j = k \text{ and } j \neq i \\
        \frac{\eps^2}{d^2} & \text{if $j = k = i$}
    \end{cases}.
\end{equation*}

Now consider the following procedure of generating the data $X$:
\begin{enumerate}
    \item Sample a random integer $I$ from the index set $\lbrb{1,2,\dots,d}$.
    \item Given $I = i$, draw $n$ i.i.d samples $\bm{X} = \lbrb{X_1, X_2, \dots, X_n}$ from $D_i$.
\end{enumerate}

Next, it suffices to show that for any estimator $\wh{\mu}(\bm{X})$, we have 
\begin{equation*}
    \Pr \lbrb{\norm{\wh{\mu} (\bm{X}) - \mu (D_I)}_{\Sigma(D_I)} \geq  \frac{1}{4}\eps} \geq \delta.
\end{equation*}

For each distribution $D_i$, consider a set of instances
\begin{gather*}
    S_i = \lbrb{\bm{X} = (x_1, \dots, x_n): m_i (\bm{X}) \geq \frac{4 \eps^2 n}{d} \text{ and } \sum_{j = 1}^d \bm{1} \lbrb{m_j (\bm{X}) < \frac{4 \eps^2 n}{d}} \geq \frac{d}{2}} \\
    \text{ where } m_j (\bm{X}) := \sum_{k = 1}^n \bm{1} \lbrb{x_k = e_j}.
\end{gather*}

Next, consider $S:=\cup S_i$. Now for any $\bm{X} \in S$, let
\begin{equation*}
    \mc{J} := \lbrb{j: m_j (\bm{X}) < \frac{4\eps^2n}{d}} \text{ and } z := \wh{\mu} (\bm{X}).
\end{equation*}

Suppose $z_j \leq \frac{\eps^2}{2d}$ for all $j \in \mc{J}$. Then, by considering the cumulative error on $\mc{J}$ we have
\begin{equation*}
    \norm{\wh{\mu} (\bm{X}) - \mu (D_k)}^2_{\Sigma(D_k)} \ge \lprp{\frac{d}{2} - 1} \frac{\lprp{\eps^2 / 2d}^2}{\eps^2 / d} \ge \frac{1}{16} \eps^2
\end{equation*}
where we use \cref{lem:psd_maha} with $M^k$ and $\Sigma (D_k)$ for any $D_k \in \mc{D}$.

On the other hand, suppose there exists $j \in \mc{J}$ such that $z_j > \frac{\eps^2}{2d}$. If $I = j$ is the sampled index, then by considering the error on $e_j$ we have by \cref{lem:one_d_proj}
\begin{equation*}
    \norm{\wh{\mu} (\bm{X}) - \mu (D_j)}_{\Sigma(D_j)} \ge \frac{\abs{\eps^2/2d - \eps^2/d^2}}{\sqrt{\eps^2/d^2}} \ge \frac{1}{4} \eps.
\end{equation*}

By the definition of $\bm{X}$, let $i$ be the index such that $m_i(\bm{X}) \ge \frac{4 \eps^2 n}{d}$, then we have the posterior probability of $I = j$ is at least that of $I = i$. From the previous two inequalities, we obtain
\begin{equation*}
    \Pr \lbrb{\norm{\wh{\mu} (\bm{X}) - \mu (D_I)}_{\Sigma(D_I)} \ge \frac{1}{4} \eps \bigg| \bm{X}} \geq \Pr \lbrb{I = j | \bm{X}} \geq \Pr \lbrb{I = i | \bm{X}}.
\end{equation*}

From the above, we have
\begin{align*}
    \Pr \lbrb{\norm{\wh{\mu} (\bm{X}) - \mu (D_I)}_{\Sigma(D_I)} \ge \frac{1}{4} \eps} &\geq \sum_{\bm{X} \in S_i} \Pr \lbrb{\norm{\wh{\mu} (\bm{X}) - \mu (D_I)}_{\Sigma(D_I)} \ge \frac{1}{4} \eps \bigg| \bm{X}} \Pr \lbrb{\bm{X}} \\
    &\geq \sum_{\bm{X} \in S_i} \Pr \lbrb{I = i | \bm{X}} \Pr \lbrb{\bm{X}} = \sum_{\bm{X} \in S_i} \Pr \lbrb{I = i, \bm{X}} \\
    &= \sum_{\bm{X} \in S_i} \Pr \lbrb{\bm{X} | I = i} \Pr \lbrb{I = i}.
\end{align*}

It remains to prove that  $\Pr \lbrb{S_i | I = i} \ge d \delta$. For simplicity of notation, denote $\Pr_i$ as the conditional distribution of $X$ under $I = i$. Define events:
\begin{gather*}
    A = \lbrb{\bm{X} = (x_1, \dots, x_n): m_i (\bm{X}) \geq \frac{4 \eps^2 n}{d}} \\
    B = \lbrb{\bm{X} = (x_1, \dots, x_n): \sum_{j = 1, j \ne i}^d \bm{1} \lbrb{m_j (\bm{X}) < \frac{4 \eps^2 n}{d}} \geq \frac{d}{2}} \\
    C = \lbrb{\bm{X} = (x_1, \dots, x_n): m_0 (\bm{X}) \ge (1 - 2\eps^2) n} \text{ where } m_0(\bm{X}) := \sum_{k = 1}^n \bm{1} \lbrb{x_k = 0}.
\end{gather*}

Note that $\Pr_i\lbrb{S_i} = \Pr_i \lbrb{A \cap B}$ and $C \subseteq B$. So we have 
\begin{equation*}
    \P_i (A \cap B) = \P_i (B|A) \P_i(A) \ge \P_i (B) \P_i(A) \ge \P_i(C) \P_i(A).
\end{equation*}
where the first inequality follows from \cref{lem:mult_cond_counts}.

We first use a Binomial tail lower bound to bound $\Pr_i(A)$ (see e.g., \cite{ash2012information}):

    \begin{equation*}
        \P_i\lbrb{ B(n, p) \ge k} \ge \frac{1}{\sqrt{8n\frac{k}{n}\lprp{1 - \frac{k}{n}}}} \exp\lprp{-n D \infdivx*{\frac{k}{n}}{p}},
    \end{equation*}
    where $B(n,p)$ denotes a Binomial random variable and $D\infdivx{a}{p}= a \log\frac{a}{p} + (1-a)\log\frac{1-a}{1-p}$ denotes the KL divergence.

    Plugging in $k = 4dnp$ and $p = \eps^2/d^2$ we obtain that 
\begin{equation*}
    \P_i(A) \ge \frac{1}{\sqrt{32ndp}} \exp\lprp{-4ndp\log\lprp{4d}} \ge 2d\delta.
\end{equation*}

Also, note that $\Pr(C) \ge 1/2$ since $m_0(x)$ follows a Binomial distribution with $\E [m_0(x)] \ge (1-\eps^2)n$. Therefore, we have $\Pr_i(S_i) \ge d\delta$ concluding the proof. \qed

\subsection{Adversarial Contamination - Proofs of \cref{thm:lb_rob_breakdown_afeq,thm:lb_rob_quant_afeq}}

We start with \cref{thm:lb_rob_breakdown_afeq}.

\paragraph{Proof of \cref{thm:lb_rob_breakdown_afeq}.} Let $r > 0$ and $S = \{e_i\}_{i = 1}^d \cup \{\bm{0}\}$. First define the family $\wt{\mc{D}} = \{\wt{D}_i\}_{i = 0}^{d + 1}$ with $\wt{D}_0$ and $\wt{D}_{d + 1}$ denoting the uniform distributions over $S$ and $S \setminus \{\bm{0}\}$ respectively and for $i \in [d]$, $\wt{D}_i$ is defined as follows:
\begin{equation*}
    \Pr_{X \thicksim \wt{D}_i} \lbrb{X = x} = 
    \begin{cases}
        0 & \text{if } x = e_i \\
        \frac{1}{d} & \text{if } x \in S \setminus \{e_i\}
    \end{cases}.
\end{equation*}
Defining $\sigma = 1 / (2dr)$, our hard family of distributions $\mc{D} = \{D_i\}_{i = 0}^{d + 1}$ is defined in the following way:
\begin{enumerate}
    \item First, generate $\wt{X} \thicksim \wt{D}_i$
    \item Independently, generate $Z \thicksim \mrm{Unif} (\{\pm 1\}^d)$
    \item Observe $X = \wt{X} + \sigma Z$.
\end{enumerate}
Note that $\Sigma (D_i)$ is non-singular for each $i$ and $\wt{D}_0$ satisfies:
\begin{equation*}
    \wt{D}_0 = \frac{d}{d + 1} \wt{D}_i + \frac{1}{d + 1} \delta_{e_i}
\end{equation*}
where $\delta_{e_i}$ denotes a Dirac-delta distribution on $e_i$. Suppose $\wh{\mu}$ satisfies for some $n \in \N$:
\begin{equation*}
    \forall D \in \mc{D}: \Pr_{\bm{X} \thicksim D_0^n} \lbrb{\norm{\wh{\mu} (\bm{X}) - \mu (D)}_{\Sigma (D)} \geq r} < \frac{1}{d + 1}.
\end{equation*}
Then, by the union bound, there must exist a sample $\bm{X}$ in the support of $D_0^n$ and an outcome $\wh{\mu} = \wh{\mu} (\bm{X})$ over the randomness of $\wh{\mu}$ such that:
\begin{equation*}
    \forall D \in \mc{D} \setminus \{D_0\}: \norm{\wh{\mu} (\bm{X}) - \mu (D)}_{\Sigma (D)} \leq r.
\end{equation*}

Then, letting $\wh{\mu} = \wh{\mu} (\bm{X})$ and considering the direction $e_i$ for any $i \in [d]$, we have by \cref{lem:one_d_proj}:
\begin{equation*}
    r \geq \norm{\wh{\mu} - \mu (D_i)}_{\Sigma (D_i)} \geq 2dr \abs{\wh{\mu}_i}.
\end{equation*}
This, implies:
\begin{equation*}
    \abs{\wh{\mu}_i} \leq \frac{1}{2d} \implies \sum_{i = 1}^d \abs{\wh{\mu}_i} \leq \frac{1}{2}.
\end{equation*}
However, note that we have for the direction $\bm{1}$ and the distribution $D_{d + 1}$ again by \cref{lem:one_d_proj}:
\begin{equation*}
    r \geq \norm{\wh{\mu} - \mu (D_{d + 1})}_{\Sigma (D_{d + 1})} \geq \abs*{\frac{1 - \sum_{i = 1}^d \wh{\mu}_i}{\sqrt{d}/(2dr)}} \geq \sqrt{d} r,
\end{equation*}
which is a contradiction thus establishing the theorem. \qed

We now move on to \cref{thm:lb_rob_quant_afeq}.

\paragraph{Proof of \cref{thm:lb_rob_quant_afeq}.} As before, we will construct a hard family of distributions. For support set $S = \{e_i\}_{i = 1}^d \cup \{\bm{1} / d\}$, define the set of distributions $\wt{\mc{D}} = \{\wt{D}_i\}_{i = 0}^{d}$ as follows:
\begin{gather*}
    \forall i \in [d]: \Pr_{X \thicksim \wt{D}_i} (X = x) =
    \begin{cases}
        0 &\text{if } x = e_i \\
        \frac{d}{d - 1} \eta &\text{if } x = e_j \text{ for } j \neq i \\
        1 - d\eta &\text{if } x = \frac{\bm{1}}{d}
    \end{cases}
    \text{ and } \\
    \Pr_{X \thicksim \wt{D}_0} (X = x) =
    \begin{cases}
        \eta &\text{if } x = e_j \text{ for any } j \in [d]\\
        1 - d\eta &\text{if } x = \frac{\bm{1}}{d}
    \end{cases}.
\end{gather*}
Let
\begin{equation*}
    r \coloneqq \frac{1}{2} \sqrt{\frac{d\eta}{1 - d\eta}} \text{ and } \sigma \coloneqq \frac{\eta}{4r}.
\end{equation*}
The hard family of distributions, $\mc{D} = \{D_i\}_{i = 0}^d$ is defined as follows:
\begin{enumerate}
    \item First, generate $\wt{X} \thicksim \wt{D}_i$
    \item Independently, generate $Z \thicksim \mrm{Unif} (\{\pm 1\}^d)$
    \item Observe $X = \wt{X} + \sigma Z$.
\end{enumerate}
As before, for each $i$, $\Sigma (D_i)$ is non-singular and $D_0$ may be written as a mixture of $D_i$ and the distribution with all its mass on $e_i$. Now, suppose $\wh{\mu}$ is an estimator that satisfies for some $n \in \N$:
\begin{equation*}
    \forall D \in \mc{D}: \Pr_{\bm{X} \thicksim D_0^n} \lbrb{\norm{\wh{\mu} (\bm{X}) - \mu (D)}_{\Sigma (D)} \geq r} < \frac{1}{d + 1}.
\end{equation*}
Then, by the union bound, there must exist a sample $\bm{X}$ in the support of $D_0^n$ and an outcome $\wh{\mu} = \wh{\mu} (\bm{X})$ over the randomness of $\wh{\mu}$ such that:
\begin{equation*}
    \forall D \in \mc{D}: \norm{\wh{\mu} (\bm{X}) - \mu (D)}_{\Sigma (D)} \leq r.
\end{equation*}
Letting $\wh{\mu} = \wh{\mu} (\bm{X})$, we have for the direction $\bm{1}$ by \cref{lem:one_d_proj}:
\begin{equation*}
    r \geq \norm{\wh{\mu} - \mu (D_{0})}_{\Sigma (D_{0})} \geq \frac{1}{\sqrt{d} \sigma}  \abs*{\sum_{i = 1}^d \wh{\mu}_i - 1}.
\end{equation*}
Therefore, there exists $i \in [d]$ with:
\begin{equation*}
    \wh{\mu}_i \geq \frac{1 - \sqrt{d} \sigma r}{d}.
\end{equation*}
For this $i$, by considering the direction $e_i$ we have by \cref{lem:one_d_proj}:
\begin{align*}
    \norm{\wh{\mu} - \mu (D_i)}_{\Sigma (D_i)} 
    &\geq \frac{d}{\sqrt{d\eta (1 - d\eta)} + d \sigma} \cdot \lprp{\frac{1 - \sqrt{d} \sigma r}{d} - (1 - d\eta) \frac{1}{d}} \\
    &= \frac{1}{\eta/(2r) + \sigma} \cdot \lprp{\eta - \frac{\sigma r}{\sqrt{d}} } > r,
\end{align*}
where the last inequality holds with our setting of $\sigma$. This leads to a contradiction concluding the proof of the theorem.\qed

\section{Acknowledgements}

The authors would like to thank Ishaq Aden-Ali, Peter Bartlett, Abhishek Shetty, and Nikita Zhivotovskiy for numerous helpful conversations through the course of this project.

\bibliographystyle{alpha}
\bibliography{refs}

\newcommand{\etalchar}[1]{$^{#1}$}
\begin{thebibliography}{HKMN23}

\bibitem[AMS99]{alon1999space}
Noga Alon, Yossi Matias, and Mario Szegedy.
\newblock The space complexity of approximating the frequency moments.
\newblock volume~58, pages 137--147. 1999.
\newblock Twenty-eighth Annual ACM Symposium on the Theory of Computing (Philadelphia, PA, 1996).

\bibitem[Ash12]{ash2012information}
Robert~B Ash.
\newblock {\em Information theory}.
\newblock Courier Corporation, 2012.

\bibitem[BGS{\etalchar{+}}21]{bgsuz}
Gavin Brown, Marco Gaboardi, Adam~D. Smith, Jonathan~R. Ullman, and Lydia Zakynthinou.
\newblock Covariance-aware private mean estimation without private covariance estimation.
\newblock In Marc'Aurelio Ranzato, Alina Beygelzimer, Yann~N. Dauphin, Percy Liang, and Jennifer~Wortman Vaughan, editors, {\em Advances in Neural Information Processing Systems 34: Annual Conference on Neural Information Processing Systems 2021, NeurIPS 2021, December 6-14, 2021, virtual}, pages 7950--7964, 2021.

\bibitem[BHS23]{bhs23}
Gavin Brown, Samuel~B. Hopkins, and Adam~D. Smith.
\newblock Fast, sample-efficient, affine-invariant private mean and covariance estimation for subgaussian distributions.
\newblock {\em CoRR}, abs/2301.12250, 2023.

\bibitem[BLM13]{boucheron2013concentration}
St\'{e}phane Boucheron, G\'{a}bor Lugosi, and Pascal Massart.
\newblock {\em Concentration inequalities}.
\newblock Oxford University Press, Oxford, 2013.
\newblock A nonasymptotic theory of independence, With a foreword by Michel Ledoux.

\bibitem[Bou02]{bousquetthesis}
Olivier Bousquet.
\newblock {\em Concentration Inequalities and Empirical Processes Theory Applied to the Analysis of Learning Algorithms}.
\newblock PhD thesis, Ecole Polytechnique, 2002.

\bibitem[CDG19]{cdg}
Yu~Cheng, Ilias Diakonikolas, and Rong Ge.
\newblock High-dimensional robust mean estimation in nearly-linear time.
\newblock In Timothy~M. Chan, editor, {\em Proceedings of the Thirtieth Annual {ACM-SIAM} Symposium on Discrete Algorithms, {SODA} 2019, San Diego, California, USA, January 6-9, 2019}, pages 2755--2771. {SIAM}, 2019.

\bibitem[CFB19]{cfb}
Yeshwanth Cherapanamjeri, Nicolas Flammarion, and Peter~L. Bartlett.
\newblock Fast mean estimation with sub-gaussian rates.
\newblock In Alina Beygelzimer and Daniel Hsu, editors, {\em Conference on Learning Theory, {COLT} 2019, 25-28 June 2019, Phoenix, AZ, {USA}}, volume~99 of {\em Proceedings of Machine Learning Research}, pages 786--806. {PMLR}, 2019.

\bibitem[Dav87]{davies87}
P.~L. Davies.
\newblock Asymptotic behaviour of {$S$}-estimates of multivariate location parameters and dispersion matrices.
\newblock {\em Ann. Statist.}, 15(3):1269--1292, 1987.

\bibitem[DG92]{dsbreakdown92}
David~L. Donoho and Miriam Gasko.
\newblock Breakdown properties of location estimates based on halfspace depth and projected outlyingness.
\newblock {\em Ann. Statist.}, 20(4):1803--1827, 1992.

\bibitem[DH83]{dhnotion83}
David Donoho and Peter~J. Huber.
\newblock The notion of breakdown point.
\newblock In {\em A {F}estschrift for {E}rich {L}. {L}ehmann}, Wadsworth Statist./Probab. Ser., pages 157--184. Wadsworth, Belmont, CA, 1983.

\bibitem[DHK23]{dhk23}
John~C. Duchi, Saminul Haque, and Rohith Kuditipudi.
\newblock A fast algorithm for adaptive private mean estimation.
\newblock {\em CoRR}, abs/2301.07078, 2023.

\bibitem[DHL19]{dhl}
Yihe Dong, Samuel~B. Hopkins, and Jerry Li.
\newblock Quantum entropy scoring for fast robust mean estimation and improved outlier detection.
\newblock In Hanna~M. Wallach, Hugo Larochelle, Alina Beygelzimer, Florence d'Alch{\'{e}}{-}Buc, Emily~B. Fox, and Roman Garnett, editors, {\em Advances in Neural Information Processing Systems 32: Annual Conference on Neural Information Processing Systems 2019, NeurIPS 2019, December 8-14, 2019, Vancouver, BC, Canada}, pages 6065--6075, 2019.

\bibitem[DK19]{diakonikolas2019recent}
Ilias Diakonikolas and Daniel~M Kane.
\newblock Recent advances in algorithmic high-dimensional robust statistics.
\newblock {\em arXiv preprint arXiv:1911.05911}, 2019.

\bibitem[DKK{\etalchar{+}}16]{diakonikolas2016robust}
Ilias Diakonikolas, Gautam Kamath, Daniel~M. Kane, Jerry Li, Ankur Moitra, and Alistair Stewart.
\newblock Robust estimators in high dimensions without the computational intractability.
\newblock In Irit Dinur, editor, {\em {IEEE} 57th Annual Symposium on Foundations of Computer Science, {FOCS} 2016, 9-11 October 2016, Hyatt Regency, New Brunswick, New Jersey, {USA}}, pages 655--664. {IEEE} Computer Society, 2016.

\bibitem[DKP20]{dkp20}
Ilias Diakonikolas, Daniel~M. Kane, and Ankit Pensia.
\newblock Outlier robust mean estimation with subgaussian rates via stability.
\newblock In Hugo Larochelle, Marc'Aurelio Ranzato, Raia Hadsell, Maria{-}Florina Balcan, and Hsuan{-}Tien Lin, editors, {\em Advances in Neural Information Processing Systems 33: Annual Conference on Neural Information Processing Systems 2020, NeurIPS 2020, December 6-12, 2020, virtual}, 2020.

\bibitem[DL88]{donoholiu88}
David~L. Donoho and Richard~C. Liu.
\newblock The ``automatic'' robustness of minimum distance functionals.
\newblock {\em Ann. Statist.}, 16(2):552--586, 1988.

\bibitem[DL22a]{dlrobsub22}
Jules Depersin and Guillaume Lecu\'{e}.
\newblock Robust sub-{G}aussian estimation of a mean vector in nearly linear time.
\newblock {\em Ann. Statist.}, 50(1):511--536, 2022.

\bibitem[DL22b]{dllinear}
Jules Depersin and Guillaume Lecu\'{e}.
\newblock Robust sub-{G}aussian estimation of a mean vector in nearly linear time.
\newblock {\em Ann. Statist.}, 50(1):511--536, 2022.

\bibitem[DL22c]{ldstaheldonoho22}
Jules Depersin and Guillaume Lecué.
\newblock On the robustness to adversarial corruption and to heavy-tailed data of the stahel–donoho median of means.
\newblock {\em Information and Inference: A Journal of the IMA}, 12(2):814--850, 2022.

\bibitem[Don82]{donoho1982breakdown}
David~L Donoho.
\newblock Breakdown properties of multivariate location estimators.
\newblock Technical report, Technical report, Harvard University, Boston., 1982.

\bibitem[GW93]{handbookconvex}
P.~M. Gruber and J.~M. Wills, editors.
\newblock {\em Handbook of convex geometry. {V}ol. {A}, {B}}.
\newblock North-Holland Publishing Co., Amsterdam, 1993.

\bibitem[Hel23]{helly}
Eduard Helly.
\newblock Über mengen konvexer körper mit gemeinschaftlichen punkte.
\newblock {\em Jahresbericht der Deutschen Mathematiker-Vereinigung}, 32:175--176, 1923.

\bibitem[HKM22]{hkm22}
Samuel~B. Hopkins, Gautam Kamath, and Mahbod Majid.
\newblock Efficient mean estimation with pure differential privacy via a sum-of-squares exponential mechanism.
\newblock In Stefano Leonardi and Anupam Gupta, editors, {\em {STOC} '22: 54th Annual {ACM} {SIGACT} Symposium on Theory of Computing, Rome, Italy, June 20 - 24, 2022}, pages 1406--1417. {ACM}, 2022.

\bibitem[HKMN23]{hkmn23}
Samuel~B. Hopkins, Gautam Kamath, Mahbod Majid, and Shyam Narayanan.
\newblock Robustness implies privacy in statistical estimation.
\newblock In Barna Saha and Rocco~A. Servedio, editors, {\em Proceedings of the 55th Annual {ACM} Symposium on Theory of Computing, {STOC} 2023, Orlando, FL, USA, June 20-23, 2023}, pages 497--506. {ACM}, 2023.

\bibitem[HLZ20]{hlz20}
Samuel~B. Hopkins, Jerry Li, and Fred Zhang.
\newblock Robust and heavy-tailed mean estimation made simple, via regret minimization.
\newblock In Hugo Larochelle, Marc'Aurelio Ranzato, Raia Hadsell, Maria{-}Florina Balcan, and Hsuan{-}Tien Lin, editors, {\em Advances in Neural Information Processing Systems 33: Annual Conference on Neural Information Processing Systems 2020, NeurIPS 2020, December 6-12, 2020, virtual}, 2020.

\bibitem[Hop20]{hopkins2018sub}
Samuel~B. Hopkins.
\newblock Mean estimation with sub-{G}aussian rates in polynomial time.
\newblock {\em Ann. Statist.}, 48(2):1193--1213, 2020.

\bibitem[Hub64]{huber64}
Peter~J. Huber.
\newblock Robust estimation of a location parameter.
\newblock {\em Ann. Math. Statist.}, 35:73--101, 1964.

\bibitem[Hub77]{hubrobcov76}
Peter~J. Huber.
\newblock Robust covariances.
\newblock In {\em Statistical decision theory and related topics, {II} ({P}roc. {S}ympos., {P}urdue {U}niv., {L}afayette, {I}nd., 1976)}, pages 165--191. Academic Press, New York, 1977.

\bibitem[Hub85]{hub85projection}
Peter~J. Huber.
\newblock Projection pursuit.
\newblock {\em Ann. Statist.}, 13(2):435--525, 1985.
\newblock With discussion.

\bibitem[JVV86]{jerrum1986random}
Mark~R. Jerrum, Leslie~G. Valiant, and Vijay~V. Vazirani.
\newblock Random generation of combinatorial structures from a uniform distribution.
\newblock {\em Theoret. Comput. Sci.}, 43(2-3):169--188, 1986.

\bibitem[KMV22]{kmv22}
Pravesh Kothari, Pasin Manurangsi, and Ameya Velingker.
\newblock Private robust estimation by stabilizing convex relaxations.
\newblock In Po{-}Ling Loh and Maxim Raginsky, editors, {\em Conference on Learning Theory, 2-5 July 2022, London, {UK}}, volume 178 of {\em Proceedings of Machine Learning Research}, pages 723--777. {PMLR}, 2022.

\bibitem[Liu90]{liunotiondepth90}
Regina~Y. Liu.
\newblock On a notion of data depth based on random simplices.
\newblock {\em Ann. Statist.}, 18(1):405--414, 1990.

\bibitem[Liu92]{liu92}
Regina~Y. Liu.
\newblock Data depth and multivariate rank tests.
\newblock In {\em {$L_1$}-statistical analysis and related methods ({N}euch\^{a}tel, 1992)}, pages 279--294. North-Holland, Amsterdam, 1992.

\bibitem[LKKO21]{lkko21}
Xiyang Liu, Weihao Kong, Sham~M. Kakade, and Sewoong Oh.
\newblock Robust and differentially private mean estimation.
\newblock In Marc'Aurelio Ranzato, Alina Beygelzimer, Yann~N. Dauphin, Percy Liang, and Jennifer~Wortman Vaughan, editors, {\em Advances in Neural Information Processing Systems 34: Annual Conference on Neural Information Processing Systems 2021, NeurIPS 2021, December 6-14, 2021, virtual}, pages 3887--3901, 2021.

\bibitem[LLVZ20]{llvz}
Zhixian Lei, Kyle Luh, Prayaag Venkat, and Fred Zhang.
\newblock A fast spectral algorithm for mean estimation with sub-gaussian rates.
\newblock In Jacob~D. Abernethy and Shivani Agarwal, editors, {\em Conference on Learning Theory, {COLT} 2020, 9-12 July 2020, Virtual Event [Graz, Austria]}, volume 125 of {\em Proceedings of Machine Learning Research}, pages 2598--2612. {PMLR}, 2020.

\bibitem[LM19a]{htsurvey}
G{\'{a}}bor Lugosi and Shahar Mendelson.
\newblock Mean estimation and regression under heavy-tailed distributions: {A} survey.
\newblock {\em Found. Comput. Math.}, 19(5):1145--1190, 2019.

\bibitem[LM19b]{lugosi2017sub}
G\'{a}bor Lugosi and Shahar Mendelson.
\newblock Sub-{G}aussian estimators of the mean of a random vector.
\newblock {\em Ann. Statist.}, 47(2):783--794, 2019.

\bibitem[LM20]{lmdirectiondependent20}
G{\'{a}}bor Lugosi and Shahar Mendelson.
\newblock Multivariate mean estimation with direction-dependent accuracy, 2020.

\bibitem[LM21]{lmtrimmed}
G\'{a}bor Lugosi and Shahar Mendelson.
\newblock Robust multivariate mean estimation: the optimality of trimmed mean.
\newblock {\em Ann. Statist.}, 49(1):393--410, 2021.

\bibitem[LPS99]{lps99}
Regina~Y. Liu, Jesse~M. Parelius, and Kesar Singh.
\newblock Multivariate analysis by data depth: descriptive statistics, graphics and inference.
\newblock {\em Ann. Statist.}, 27(3):783--858, 1999.
\newblock With discussion and a rejoinder by Liu and Singh.

\bibitem[LR91]{lrbreakdown91}
Hendrik~P. Lopuha\"{a} and Peter~J. Rousseeuw.
\newblock Breakdown points of affine equivariant estimators of multivariate location and covariance matrices.
\newblock {\em Ann. Statist.}, 19(1):229--248, 1991.

\bibitem[LS92]{lsorderingdirectional92}
Regina~Y. Liu and Kesar Singh.
\newblock Ordering directional data: concepts of data depth on circles and spheres.
\newblock {\em Ann. Statist.}, 20(3):1468--1484, 1992.

\bibitem[LT11]{ledoux1991probability}
Michel Ledoux and Michel Talagrand.
\newblock {\em Probability in {B}anach spaces}.
\newblock Classics in Mathematics. Springer-Verlag, Berlin, 2011.
\newblock Isoperimetry and processes, Reprint of the 1991 edition.

\bibitem[Mar76]{mrobustmest76}
Ricardo~Antonio Maronna.
\newblock Robust {$M$}-estimators of multivariate location and scatter.
\newblock {\em Ann. Statist.}, 4(1):51--67, 1976.

\bibitem[NY83]{NemYud83}
Arkadi~S. Nemirovsky and David~B. Yudin.
\newblock {\em Problem complexity and method efficiency in optimization}.
\newblock Wiley-Interscience Series in Discrete Mathematics. John Wiley \& Sons, Inc., New York, 1983.
\newblock Translated from the Russian and with a preface by E. R. Dawson.

\bibitem[Oja83]{oja83}
Hannu Oja.
\newblock Descriptive statistics for multivariate distributions.
\newblock {\em Statist. Probab. Lett.}, 1(6):327--332, 1983.

\bibitem[Rou85]{rousseeuw85}
Peter Rousseeuw.
\newblock Multivariate estimation with high breakdown point.
\newblock In {\em Mathematical statistics and applications, {V}ol. {B} ({B}ad {T}atzmannsdorf, 1983)}, pages 283--297. Reidel, Dordrecht, 1985.

\bibitem[RY84]{rousseeuwyohai84}
P.~Rousseeuw and V.~Yohai.
\newblock Robust regression by means of {S}-estimators.
\newblock In {\em Robust and nonlinear time series analysis ({H}eidelberg, 1983)}, volume~26 of {\em Lect. Notes Stat.}, pages 256--272. Springer, New York, 1984.

\bibitem[Sta81]{stahel1981}
Werner~A. Stahel.
\newblock {\em Robuste Schätzungen. infinitesimale Optimalität und Schätzungen von Kovarianzmatrizen}.
\newblock Doctoral thesis, ETH Zurich, Zürich, 1981.

\bibitem[Tal94]{talagrandsharper}
Michel Talagrand.
\newblock Sharper bounds for {G}aussian and empirical processes.
\newblock {\em Ann. Probab.}, 22(1):28--76, 1994.

\bibitem[Tal96]{talagrandnewconcentration}
Michel Talagrand.
\newblock New concentration inequalities in product spaces.
\newblock {\em Invent. Math.}, 126(3):505--563, 1996.

\bibitem[Tuk75]{tukeypicturing75}
John~W. Tukey.
\newblock Mathematics and the picturing of data.
\newblock In {\em Proceedings of the {I}nternational {C}ongress of {M}athematicians ({V}ancouver, {B}.{C}., 1974), {V}ol. 2}, pages 523--531. Canad. Math. Congress, Montreal, Que., 1975.

\bibitem[Wai19]{wainwright}
Martin~J. Wainwright.
\newblock {\em High-Dimensional Statistics: A Non-Asymptotic Viewpoint}.
\newblock Cambridge University Press, Cambridge, 2019.

\bibitem[Yoh87]{yohai87}
V\'{\i}ctor~J. Yohai.
\newblock High breakdown-point and high efficiency robust estimates for regression.
\newblock {\em Ann. Statist.}, 15(2):642--656, 1987.

\bibitem[ZCH04]{zchonstahel04}
Yijun Zuo, Hengjian Cui, and Xuming He.
\newblock On the {S}tahel-{D}onoho estimator and depth-weighted means of multivariate data.
\newblock {\em Ann. Statist.}, 32(1):167--188, 2004.

\bibitem[Zuo03]{zuoprojdepth03}
Yijun Zuo.
\newblock Projection-based depth functions and associated medians.
\newblock {\em Ann. Statist.}, 31(5):1460--1490, 2003.

\end{thebibliography}

\appendix

\section{Auxiliary Results}
\label{sec:aux}

Here we list auxiliary results important to our analysis. The first is Helly's celebrated theorem \cite{helly} on convex intersections as stated in \cite[Theorem 1.1, Chapter 2.1]{handbookconvex}.

\begin{theorem}[\cite{helly,handbookconvex}]
    \label{thm:helly}
    Let $\mc{K}$ be a family of convex sets in $\R^d$, and suppose $\mc{K}$ is finite or each member of $\mc{K}$ is compact. If every $d + 1$ or fewer members of $\mc{K}$ have a common point, then there is a point common to all members of $\mc{K}$.
\end{theorem}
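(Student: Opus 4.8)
The plan is to prove Helly's theorem in two stages: first for \emph{finite} families $\mc{K}$ by induction on the number of sets, and then bootstrap to the infinite compact case via a finite-intersection-property argument. Throughout, write $\mc{K} = \{K_1, \dots, K_n\}$ in the finite case, with the standing hypothesis that every subcollection of at most $d+1$ of the $K_i$ has a common point.

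\textbf{Finite case.} I would induct on $n = |\mc{K}|$. The base case $n \leq d+1$ is immediate from the hypothesis. For the inductive step, assume $n \geq d+2$ and that the statement holds for any $n-1$ convex sets meeting the hypothesis. For each $i \in [n]$, the subfamily $\{K_j : j \neq i\}$ has $n-1 \geq d+1$ members, and every $d+1$ of them still intersect, so by the inductive hypothesis there is a point $p_i \in \bigcap_{j \neq i} K_j$. This yields $n \geq d+2$ points $p_1, \dots, p_n \in \R^d$, and the goal becomes extracting a single point lying in every $K_k$ from them.

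\textbf{Radon's theorem, the crux.} The key step is to invoke (and prove) Radon's theorem: any $d+2$ points in $\R^d$ split into two subsets whose convex hulls intersect. I would prove this by linear algebra — the homogeneous system $\sum_i \lambda_i p_i = 0$, $\sum_i \lambda_i = 0$ has $d+1$ equations in $n \geq d+2$ unknowns, hence a nontrivial solution $(\lambda_i)_{i=1}^n$. Put $I = \{i : \lambda_i > 0\}$ and $J = [n] \setminus I$; since $\sum_i \lambda_i = 0$ and $(\lambda_i)$ is not identically zero, both $I$ and $J$ are nonempty. With $S := \sum_{i \in I} \lambda_i = -\sum_{j \in J} \lambda_j > 0$, the point $q := \sum_{i \in I} (\lambda_i / S) p_i = \sum_{j \in J} (-\lambda_j / S) p_j$ lies in $\conv\{p_i : i \in I\} \cap \conv\{p_j : j \in J\}$. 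Now fix any $k \in [n]$; without loss of generality $k \in J$. Then every $i \in I$ satisfies $i \neq k$, so $p_i \in K_k$, and by convexity $\conv\{p_i : i \in I\} \subseteq K_k$, whence $q \in K_k$. As $k$ was arbitrary, $q \in \bigcap_{k \in [n]} K_k$, completing the induction.

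\textbf{Infinite compact case.} If $\mc{K}$ is infinite with each member compact, fix some $K_0 \in \mc{K}$. By the finite case just established, every finite subfamily of $\mc{K}$ has nonempty intersection; intersecting each such with $K_0$, the family $\{K \cap K_0 : K \in \mc{K}\}$ consists of closed subsets of the compact set $K_0$ and has the finite intersection property, so $\bigcap_{K \in \mc{K}} (K \cap K_0) = \bigcap_{K \in \mc{K}} K \neq \emptyset$. The main obstacle is really just the Radon step (and recognizing it as the right tool); the rest is bookkeeping, with the one subtlety being the nonemptiness of both $I$ and $J$, which I flagged above.
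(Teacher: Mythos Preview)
Your proof is correct and is the classical argument for Helly's theorem via Radon's lemma. Note, however, that the paper does not prove this statement at all: it is listed in the appendix as an auxiliary result cited from \cite{helly,handbookconvex}, so there is no ``paper's own proof'' to compare against. Your write-up supplies exactly the standard textbook proof (induction on $n$, Radon partition at the inductive step, finite intersection property for the compact case), and the details you flag --- nonemptiness of both $I$ and $J$, and the containment $\conv\{p_i : i \in I\} \subseteq K_k$ when $k \in J$ --- are handled correctly.
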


We also require Bousquet's famous inequality on the suprema of empirical processes \cite{bousquetthesis} which builds on prior results by Talagrand \cite{talagrandsharper,talagrandnewconcentration}.

\begin{theorem}[\cite{bousquetthesis,boucheron2013concentration}]
    \label{thm:bousequet_thm}
    Let $X_1, \dots, X_n$ be independent identically distributed random vectors indexed by an index set $\mc{T}$. Assume that $\E [X_{i, s}] = 0$, and $X_{i, s} \leq 1$ for all $s \in \mc{T}$. Let $Z = \sup_{s \in \mc{T}} \sum_{i = 1}^n X_{i, s}$, $\nu = 2 \E Z + \sigma^2$ where $\sigma^2 = \sup_{s \in \mc{T}} \sum_{i = 1}^n \E X_{i, s}^2$ is the wimpy variance. Let $\phi(u) = e^u - u - 1$ and $h(u) = (1 + u)\log (1 + u) - u$, for $u \geq -1$. Then for all $\lambda \geq 0$,
    \begin{equation*}
        \log \E e^{\lambda (Z - \E Z)} \leq \nu \phi(\lambda).
    \end{equation*}
    Also, for all $t \geq 0$,
    \begin{equation*}
        \P \lbrb{Z \geq \E Z + t} \leq e^{-\nu h(t / \nu)} \leq \exp \lprp{-\frac{t^2}{2(\nu + t / 3)}}.
    \end{equation*}
\end{theorem}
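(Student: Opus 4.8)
The plan is to first prove the logarithmic moment generating function bound $\log \mathbb{E}\, e^{\lambda (Z - \mathbb{E} Z)} \le \nu\,\phi(\lambda)$ for all $\lambda \ge 0$, and then to extract the two tail inequalities from it. The second step is routine. By Markov's inequality, for any $\lambda \ge 0$,
\begin{equation*}
    \Pr\{Z \ge \mathbb{E} Z + t\} \le e^{-\lambda(\mathbb{E} Z + t)}\,\mathbb{E}\, e^{\lambda Z} = e^{-\lambda t}\,\mathbb{E}\, e^{\lambda(Z - \mathbb{E} Z)} \le \exp\big(-\lambda t + \nu\,\phi(\lambda)\big),
\end{equation*}
and minimizing the exponent over $\lambda \ge 0$ (the minimizer is $\lambda = \log(1 + t/\nu)$) produces exactly $-\nu\, h(t/\nu)$, which is the first tail bound. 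The weaker Bernstein-type bound then follows from the elementary one-variable inequality $h(u) \ge u^2 / \big(2(1 + u/3)\big)$, valid for $u \ge 0$, applied with $u = t/\nu$.

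The substance is therefore the MGF bound, which I would establish by the entropy (Herbst) method, following Bousquet's argument (building on earlier work of Talagrand). Write $\theta(\lambda) = \log \mathbb{E}\, e^{\lambda Z}$ and, for each $i \in [n]$, set $Z_i = \sup_{s \in \mathcal{T}} \sum_{j \ne i} X_{j,s}$, a function of $(X_j)_{j \ne i}$ alone. The starting point is the sub-additivity (tensorization) of entropy over the independent coordinates, $\mathrm{Ent}\big[e^{\lambda Z}\big] \le \sum_{i = 1}^n \mathbb{E}\big[\mathrm{Ent}^{(i)}[e^{\lambda Z}]\big]$, where $\mathrm{Ent}^{(i)}$ denotes entropy over $X_i$ with the other coordinates frozen. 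Combining this with the identity $\mathrm{Ent}[e^{\lambda Z}] = \big(\lambda\theta'(\lambda) - \theta(\lambda)\big)\,\mathbb{E}\, e^{\lambda Z}$ and the variational bound $\mathrm{Ent}^{(i)}[Y] \le \mathbb{E}^{(i)}[Y\log Y - Y\log c - Y + c]$ applied with $Y = e^{\lambda Z}$, $c = e^{\lambda Z_i}$ (which gives the modified logarithmic Sobolev estimate $\mathrm{Ent}^{(i)}[e^{\lambda Z}] \le \mathbb{E}^{(i)}[e^{\lambda Z}\,\phi(-\lambda(Z - Z_i))]$) reduces the task to bounding $\sum_{i = 1}^n \mathbb{E}[e^{\lambda Z}\,\phi(-\lambda(Z - Z_i))]$.

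The main obstacle is the combinatorial lemma that carries out this last bound sharply enough to land on the exact constant $\nu = 2\mathbb{E} Z + \sigma^2$. The idea is to exploit the one-sided bound $X_{i,s} \le 1$ together with the supremum structure: if $\widehat s$ attains the supremum defining $Z$, then $Z - Z_i \le X_{i,\widehat s} \le 1$ for every $i$, and $\sum_{i = 1}^n (Z - Z_i) \le \sum_{i = 1}^n X_{i,\widehat s} = Z$. Convexity of $u \mapsto \phi(-\lambda u)$ with $\phi(0) = 0$ yields the linear control $\phi(-\lambda(Z - Z_i)) \le (Z - Z_i)\,\phi(\lambda)$ over the relevant range, whose sum contributes a term proportional to $\phi(\lambda)\,\mathbb{E}[Z\, e^{\lambda Z}]$; a complementary quadratic estimate $\phi(-\lambda u) \le \frac{1}{2} \lambda^2 u^2$, together with a bound on $\sum_i (Z - Z_i)^2$ by the wimpy variance $\sigma^2$, supplies the $\sigma^2$ contribution. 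The delicate points — which I expect to be the crux of the argument — are that the maximizer $\widehat s$ is data-dependent and generically differs from the index attaining $\sigma^2$ (this mismatch is exactly what forces the factor $2$ in front of $\mathbb{E} Z$ rather than $1$), and that $Z - Z_i$ can in principle be negative and so must be absorbed rather than discarded. Feeding the resulting estimate back into the tensorization inequality turns it into an ordinary differential inequality for $\theta$; integrating it via the Herbst substitution $\frac{d}{d\lambda}\big(\theta(\lambda)/\lambda\big) = \big(\lambda\theta'(\lambda) - \theta(\lambda)\big)/\lambda^2$ with the initial data $\theta(0) = 0$, $\theta'(0) = \mathbb{E} Z$ yields $\theta(\lambda) \le \lambda\,\mathbb{E} Z + \nu\,\phi(\lambda)$, i.e. $\log \mathbb{E}\, e^{\lambda(Z - \mathbb{E} Z)} \le \nu\,\phi(\lambda)$, completing the proof. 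Everything outside this combinatorial lemma is a standard assembly of entropy-method and calculus ingredients.
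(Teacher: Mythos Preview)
The paper does not prove this statement at all: it is quoted in the appendix as an auxiliary result with citations to Bousquet's thesis and to Boucheron--Lugosi--Massart, and is invoked as a black box in the proof of \cref{lem:conc_afeq}. So there is no ``paper's own proof'' to compare against.

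That said, your outline is the standard entropy-method route used in those references: tensorization of entropy, the modified log-Sobolev estimate $\mathrm{Ent}^{(i)}[e^{\lambda Z}] \le \mathbb{E}^{(i)}[e^{\lambda Z}\,\phi(-\lambda(Z-Z_i))]$, a combinatorial control of $\sum_i \phi(-\lambda(Z-Z_i))$ using $Z - Z_i \le X_{i,\widehat s} \le 1$ and $\sum_i (Z - Z_i) \le Z$, and then the Herbst integration. You are right that the subtlety lies in the combinatorial lemma, in particular in handling the data-dependent maximizer $\widehat s$ and the possible negativity of $Z - Z_i$; your heuristic split into a linear piece (yielding the $2\,\mathbb{E} Z$ term) and a quadratic piece (yielding $\sigma^2$) is morally correct but is not quite the way the sharp constant is obtained in Bousquet's argument --- there one uses a more careful self-bounding inequality of the form $\sum_i (Z - Z_i) \le Z$ together with $\sum_i (Z - Z_i)^2 \le Z$ and a refined one-variable bound on $\phi$, rather than the crude $\phi(-\lambda u) \le \tfrac12 \lambda^2 u^2$. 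As written, your sketch would likely land on a constant somewhat worse than $\nu = 2\,\mathbb{E} Z + \sigma^2$ unless that step is tightened. The derivation of the two tail bounds from the MGF bound is correct as you wrote it.
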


We present the well-known Ledoux-Talagrand contraction inequality \cite{ledoux1991probability}. 

\begin{theorem}[\cite{ledoux1991probability,boucheron2013concentration}]
    \label{thm:ledtal}
    Let $x_1, \dots, x_n$ be vectors whose real-valued components are indexed by $\mc{T}$, that is, $x_i = (x_{i, s})_{s \in \mc{T}}$. For each $i = 1, \dots, n$, let $\phi_i: \R \to \R$ be a $1$-Lipschitz function such that $\phi_i (0) = 0$. Let $\eps_1, \dots, \eps_n$ be independent Rademacher random variables, and let $\Psi: [0, \infty) \to \R$ by a non-decreasing convex function. Then,
    \begin{equation*}
        \E \lsrs{\Psi \lprp{\sup_{s \in \mc{T}} \sum_{i = 1}^n \eps_i \phi_i (x_{i, s})}} \leq \E \lsrs{\Psi \lprp{\sup_{s \in \mc{T}} \sum_{i = 1}^n \eps_i x_{i, s}}}
    \end{equation*}
    and 
    \begin{equation*}
        \E \lsrs{\Psi \lprp{\frac{1}{2} \sup_{s \in \mc{T}} \abs*{\sum_{i = 1}^n \eps_i \phi_i (x_{i, s})}}} \leq \E \lsrs{\Psi \lprp{\sup_{s \in \mc{T}} \abs*{\sum_{i = 1}^n \eps_i x_{i, s}}}}.
    \end{equation*}
\end{theorem}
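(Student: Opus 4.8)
The plan is to prove the contraction inequality along the classical two–step route: contract one coordinate at a time, and reduce the single–coordinate step, after conditioning on one Rademacher sign, to an elementary comparison between two pairs of real numbers.

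\emph{Reductions.} First I would note it suffices to treat a \emph{finite} index set $\mc{T}$: the general case follows by applying the finite case to finite $\mc{T}_0 \subset \mc{T}$ and letting $\mc{T}_0 \uparrow \mc{T}$, using that $\Psi$ is continuous (convex on an interval) and monotone; with $\mc{T}$ finite every supremum below is attained. I would also extend $\Psi$ from $[0,\infty)$ to all of $\R$ by setting $\Psi(x) := \Psi(0)$ for $x<0$; since $\Psi$ is nondecreasing this preserves convexity and monotonicity and lets us evaluate $\Psi$ at possibly negative arguments. Finally, the first inequality reduces to the \emph{single–coordinate} claim: for arbitrary reals $(u_s)_{s\in\mc{T}}$, $(v_s)_{s\in\mc{T}}$ and any $1$–Lipschitz $\phi$ with $\phi(0)=0$,
\[
\tfrac12\Psi\big(\sup\nolimits_s(u_s+\phi(v_s))\big)+\tfrac12\Psi\big(\sup\nolimits_s(u_s-\phi(v_s))\big)\ \le\ \tfrac12\Psi\big(\sup\nolimits_s(u_s+v_s)\big)+\tfrac12\Psi\big(\sup\nolimits_s(u_s-v_s)\big).
\]
Granting this, for each $k$ I condition on $(\eps_i)_{i\ne k}$, set $u_s = \sum_{i\ne k}\eps_i(\cdot)_{i,s}$ and $v_s = x_{k,s}$, $\phi=\phi_k$, and observe that averaging over $\eps_k$ turns both sides into exactly the two–term averages above; applying the claim replaces $\phi_k(x_{k,s})$ by $x_{k,s}$, and iterating over $k=1,\dots,n$ via the tower property converts every $\phi_i$ into the identity, yielding the first inequality.

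\emph{The single–coordinate step.} Let $s_1,s_2$ attain the two suprema on the left; put $a = u_{s_1}+\phi(v_{s_1})$, $b = u_{s_2}-\phi(v_{s_2})$, $P=\sup_s(u_s+v_s)$, $Q=\sup_s(u_s-v_s)$, and assume without loss of generality $P\ge Q$ and $a\ge b$ (both sides are symmetric in the relevant pairs). Since $\phi(0)=0$ forces $|\phi(v)|\le |v|$, we get $a\le u_{s_1}+|v_{s_1}|=\max(u_{s_1}+v_{s_1},u_{s_1}-v_{s_1})\le P$ and likewise $b\le P$; since $\phi$ is $1$–Lipschitz, $\phi(v_{s_1})-\phi(v_{s_2})\le|v_{s_1}-v_{s_2}|$, and splitting on the sign of $v_{s_1}-v_{s_2}$ gives $a+b\le P+Q$ in either case. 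Then the pair $(P,\,a+b-P)$ has the same sum as $(a,b)$, with $a+b-P\le Q$ (from $a+b\le P+Q$) and $a+b-P\le b$ (from $a\le P$), so $(P,a+b-P)$ majorizes $(a,b)$; Schur–convexity of $(x,y)\mapsto\Psi(x)+\Psi(y)$ for convex $\Psi$ gives $\Psi(a)+\Psi(b)\le\Psi(P)+\Psi(a+b-P)$, and monotonicity of $\Psi$ with $a+b-P\le Q$ gives $\Psi(a+b-P)\le\Psi(Q)$, proving the claim.

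\emph{The absolute–value version.} I would augment $\mc{T}$ with one point $t_0$ with $x_{i,t_0}=0$ for all $i$; this leaves all suprema of absolute values unchanged but makes $A:=\sup_s\sum_i\eps_i\phi_i(x_{i,s})\ge 0$ and $B:=\sup_s\sum_i\eps_i(-\phi_i)(x_{i,s})\ge 0$, with each $-\phi_i$ again $1$–Lipschitz and vanishing at $0$. Then $\sup_s\big|\sum_i\eps_i\phi_i(x_{i,s})\big| = \max(A,B)\le A+B$, so $\Psi\big(\tfrac12\sup_s|\cdots|\big)\le\Psi\big(\tfrac12 A+\tfrac12 B\big)\le\tfrac12\Psi(A)+\tfrac12\Psi(B)$ by monotonicity and convexity; taking expectations and applying the already–proved first inequality to bound each of $\E\Psi(A)$ and $\E\Psi(B)$ by $\E\Psi\big(\sup_s|\sum_i\eps_i x_{i,s}|\big)$ gives the second inequality. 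I expect the main obstacle to be the single–coordinate step — specifically getting the majorization bookkeeping exactly right ($a+b-P\le Q$ together with the WLOG orderings of $a,b$ and $P,Q$); the coordinate–by–coordinate induction and the augmentation tricks are routine.
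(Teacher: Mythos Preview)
The paper does not supply a proof of this theorem: it is stated in the appendix as a cited auxiliary result from \cite{ledoux1991probability,boucheron2013concentration}, with no argument given. So there is nothing in the paper to compare your proposal against.

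That said, your proposal is correct and is essentially the classical Ledoux--Talagrand argument. The reductions (finite $\mc{T}$, extending $\Psi$ by a constant on $(-\infty,0)$, peeling off one coordinate at a time via conditioning) are standard and sound. In the single–coordinate step your bookkeeping is right: from $\phi(0)=0$ and $1$-Lipschitzness you get $a\le P$ and $b\le P$; from $1$-Lipschitzness alone you get $a+b\le P+Q$ by the two–case split on $\mathrm{sgn}(v_{s_1}-v_{s_2})$; and then the pair $(P,a+b-P)$ indeed majorizes $(a,b)$ (same sum, $a+b-P\le b\le a\le P$), so convexity of $\Psi$ gives $\Psi(a)+\Psi(b)\le\Psi(P)+\Psi(a+b-P)$, and monotonicity with $a+b-P\le Q$ finishes. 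The WLOG orderings are legitimate since both sides of the claim are symmetric in the relevant pairs. For the absolute–value version, the augmentation by a zero index is exactly the right device to force $A,B\ge 0$, and bounding $\E\Psi(A)$ and $\E\Psi(B)$ separately via the first inequality (applied once with $\phi_i$ and once with $-\phi_i$) followed by $\sup_s\sum_i\eps_i x_{i,s}\le\sup_s|\sum_i\eps_i x_{i,s}|$ is clean. No gaps.
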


We will use the following simple corollary of the second conclusion in our proof.

\begin{corollary}
    \label{cor:ledtal}
    Assume the setting of \cref{thm:ledtal}. Then,
    \begin{equation*}
        \E \lsrs{\sup_{s \in \mc{T}} \abs*{\sum_{i = 1}^n \eps_i \phi_i (x_{i, s})}} \leq 2 \E \lsrs{\sup_{s \in \mc{T}} \abs*{\sum_{i = 1}^n \eps_i x_{i, s}}}.
    \end{equation*}
\end{corollary}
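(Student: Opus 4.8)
The plan is to deduce this immediately from the second inequality of Theorem~\ref{thm:ledtal} by choosing the comparison function $\Psi$ appropriately. Take $\Psi : [0,\infty) \to \R$ to be the identity, $\Psi(u) = u$. This is non-decreasing and convex, hence an admissible choice in Theorem~\ref{thm:ledtal}. Since we are assuming the setting of that theorem, the $\phi_i$ are $1$-Lipschitz with $\phi_i(0) = 0$ and the $\eps_i$ are independent Rademacher variables, so all hypotheses are met. Applying the second conclusion of Theorem~\ref{thm:ledtal} with this $\Psi$ gives
\[
\frac{1}{2}\,\E \lsrs{\sup_{s \in \mc{T}} \abs*{\sum_{i = 1}^n \eps_i \phi_i (x_{i, s})}} \;\leq\; \E \lsrs{\sup_{s \in \mc{T}} \abs*{\sum_{i = 1}^n \eps_i x_{i, s}}},
\]
where I have pulled the factor $\tfrac12$ out of $\Psi$ using its homogeneity (equivalently, $\Psi(\tfrac12 u) = \tfrac12 \Psi(u)$ for the identity) and linearity of expectation. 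Multiplying both sides by $2$ yields exactly the claimed inequality.

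There is no real obstacle here: the corollary is just the special case $\Psi = \mathrm{id}$ of the symmetrized contraction principle, and the only thing worth remarking is that the identity is indeed convex and non-decreasing on $[0,\infty)$, so the hypothesis on $\Psi$ is satisfied. (One could alternatively invoke the first conclusion of Theorem~\ref{thm:ledtal} together with a standard symmetrization/union-type argument to control the absolute value, but routing through the second conclusion is cleaner and avoids the extra factor.)
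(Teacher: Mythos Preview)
Your proof is correct and is exactly the approach the paper intends: the paper states the result as a ``simple corollary of the second conclusion'' of Theorem~\ref{thm:ledtal} without further detail, and your choice $\Psi = \mathrm{id}$ is precisely how one extracts it.
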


We also require the following simple lemma frequently in our analysis.
\begin{lemma}
    \label{lem:one_d_proj}
    Let $\Sigma \in \R^{d \times d}$ and $\Sigma \succ 0$. Then, we have:
    \begin{equation*}
        \forall x, v \in \R^d, v \neq 0: \norm{x}_\Sigma \geq \frac{\abs{\inp{x}{v}}}{\sqrt{v^\top \Sigma v}}.
    \end{equation*}
\end{lemma}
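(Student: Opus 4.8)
\textbf{Proof Proposal for \cref{lem:one_d_proj}.}

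The plan is to reduce the Mahalanobis norm bound to the Cauchy–Schwarz inequality after an appropriate change of variables that renders $\Sigma$ the identity. Concretely, since $\Sigma \succ 0$, it admits a symmetric positive-definite square root $\Sigma^{1/2}$ with inverse $\Sigma^{-1/2}$. Write $y = \Sigma^{-1/2} x$ and $w = \Sigma^{1/2} v$. Then $\norm{x}_\Sigma = \sqrt{x^\top \Sigma^{-1} x} = \norm{y}$, and $\inp{x}{v} = x^\top v = (\Sigma^{1/2} y)^\top (\Sigma^{-1/2} w) = y^\top w = \inp{y}{w}$, while $v^\top \Sigma v = w^\top w = \norm{w}^2$. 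So the claimed inequality becomes exactly $\norm{y} \geq \abs{\inp{y}{w}} / \norm{w}$, which is the Cauchy–Schwarz inequality $\abs{\inp{y}{w}} \leq \norm{y}\norm{w}$ together with $w \neq 0$ (which holds since $v \neq 0$ and $\Sigma^{1/2}$ is invertible).

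Alternatively, and perhaps more cleanly for a self-contained write-up, one can avoid square roots entirely: apply Cauchy–Schwarz in the inner product $\langle a, b\rangle_{\Sigma^{-1}} := a^\top \Sigma^{-1} b$, which is a genuine inner product because $\Sigma^{-1} \succ 0$. Taking $a = x$ and $b = \Sigma v$ gives $\abs{x^\top \Sigma^{-1} \Sigma v} \leq \sqrt{x^\top \Sigma^{-1} x}\,\sqrt{(\Sigma v)^\top \Sigma^{-1} (\Sigma v)}$, i.e. $\abs{\inp{x}{v}} \leq \norm{x}_\Sigma \sqrt{v^\top \Sigma v}$, and dividing by $\sqrt{v^\top \Sigma v} > 0$ yields the claim. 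I would present this second version, as it is a two-line argument invoking only Cauchy–Schwarz.

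There is no real obstacle here; the only things to be careful about are (i) noting $v \neq 0 \Rightarrow v^\top \Sigma v > 0$ so the division is legitimate, and (ii) making sure the bilinear form used is genuinely an inner product, which follows from $\Sigma \succ 0 \iff \Sigma^{-1} \succ 0$. Everything else is immediate.
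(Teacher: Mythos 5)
Your first argument (change of variables $y=\Sigma^{-1/2}x$, $w=\Sigma^{1/2}v$ followed by Cauchy--Schwarz) is exactly the paper's proof, which writes $\norm{x}_\Sigma = \norm{\Sigma^{-1/2}x}$ and lower-bounds it by the projection onto the unit vector $\Sigma^{1/2}v/\norm{\Sigma^{1/2}v}$; your second version is an equivalent rephrasing. The proposal is correct and matches the paper's approach.
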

\begin{proof}
    We have:
    \begin{align*}
        \norm{x}_\Sigma &= \norm*{\Sigma^{-1/2} x} \geq \abs*{\inp*{\frac{\Sigma^{1/2} v}{\norm{\Sigma^{1/2} v}}}{\Sigma^{-1/2} x}} = \frac{\abs{\inp{x}{v}}}{\sqrt{v^\top \Sigma v}}
    \end{align*}
    concluding the proof of the lemma.
\end{proof}

Next, we relate the psd ordering of two matrices to their induced Mahalanobis norms.

\begin{lemma}
    \label{lem:psd_maha}
    Let $\Sigma_1, \Sigma_2 \in \R^{d \times d}$ and $\Sigma_1 \succcurlyeq \Sigma_2 \succ 0$. Then,
    \begin{equation*}
        \forall x \in \R^d: \norm{x}_{\Sigma_1} \leq \norm{x}_{\Sigma_2}.
    \end{equation*}
\end{lemma}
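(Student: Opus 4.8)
The plan is to reduce the claimed norm inequality to the operator-antimonotonicity of matrix inversion, namely that $\Sigma_1 \succcurlyeq \Sigma_2 \succ 0$ implies $\Sigma_1^{-1} \preccurlyeq \Sigma_2^{-1}$. Granting this, observe that for any $x \in \R^d$,
\begin{equation*}
    \norm{x}_{\Sigma_1}^2 = x^\top \Sigma_1^{-1} x \leq x^\top \Sigma_2^{-1} x = \norm{x}_{\Sigma_2}^2,
\end{equation*}
and taking square roots (both sides are nonnegative since $\Sigma_1, \Sigma_2 \succ 0$) yields the lemma.

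It therefore remains to establish $\Sigma_1^{-1} \preccurlyeq \Sigma_2^{-1}$. First I would note that $\Sigma_2 \succ 0$ guarantees that $\Sigma_2^{1/2}$ and $\Sigma_2^{-1/2}$ exist as symmetric positive-definite matrices. Conjugating the hypothesis $\Sigma_1 \succcurlyeq \Sigma_2$ by $\Sigma_2^{-1/2}$ preserves the PSD ordering, so $M \coloneqq \Sigma_2^{-1/2} \Sigma_1 \Sigma_2^{-1/2} \succcurlyeq I$. Since $M$ is symmetric, the spectral theorem shows all its eigenvalues are at least $1$, hence all eigenvalues of $M^{-1}$ are at most $1$, i.e.\ $M^{-1} \preccurlyeq I$. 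Finally, $M^{-1} = \Sigma_2^{1/2} \Sigma_1^{-1} \Sigma_2^{1/2}$, so conjugating $M^{-1} \preccurlyeq I$ by $\Sigma_2^{-1/2}$ gives $\Sigma_1^{-1} \preccurlyeq \Sigma_2^{-1}$, as desired.

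There is essentially no real obstacle here; the one place that warrants care is the step passing from $M \succcurlyeq I$ to $M^{-1} \preccurlyeq I$, which is exactly where the argument uses that eigenvalue inequalities for symmetric matrices invert under reciprocation — this is the crux of why inversion is antimonotone and is the only nontrivial ingredient. Everything else is bookkeeping with congruence transformations, which preserve the Löwner order because $A \succcurlyeq B \iff v^\top A v \geq v^\top B v$ for all $v$, applied to $v = \Sigma_2^{-1/2} w$.
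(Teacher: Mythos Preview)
Your proposal is correct and follows essentially the same route as the paper: conjugate $\Sigma_1 \succcurlyeq \Sigma_2$ by $\Sigma_2^{-1/2}$ to get $M \succcurlyeq I$, invert to obtain $M^{-1} = \Sigma_2^{1/2}\Sigma_1^{-1}\Sigma_2^{1/2} \preccurlyeq I$, and conjugate back to conclude $\Sigma_1^{-1} \preccurlyeq \Sigma_2^{-1}$, from which the norm inequality is immediate. The paper states this chain in one line without the eigenvalue justification you spell out, but the argument is identical.
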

\begin{proof}
    Observe
    \begin{equation*}
        \Sigma_1 \succcurlyeq \Sigma_2 \implies \Sigma_2^{-1/2} \Sigma_1 \Sigma_2^{-1/2} \succcurlyeq I \implies \Sigma_2^{1/2} \Sigma_1^{-1} \Sigma_2^{1/2} \preccurlyeq I \implies \Sigma_1^{-1} \preccurlyeq \Sigma_2^{-1}
    \end{equation*}
    yielding the lemma by the definition of $\norm{\cdot}_{\Sigma}$.
\end{proof}

\begin{lemma}
    \label{lem:mult_cond_counts}
    Let $k \in \N$ and $D$ be a distribution over $[k]$ with probabilities $\{p_i\}_{i = 1}^k$ with $p_i \geq 0$ and $\sum_{i = 1}^k p_i = 1$. Furthermore, let $n \in \N$ and $\bm{X} = \{X_1, \dots, X_n\}$ be generated i.i.d. from $D$. For any $j \in [k]$ and thresholds $\{\tau_{i}\}_{i = 1}^n$ and $\tau$ with $\tau_i, \tau > 0$, define the events
    \begin{gather*}
        A \coloneqq \lbrb{\bm{X} \in [k]^n: m_j (\bm{X}) \geq \tau_j} \\
        B \coloneqq \lbrb{\bm{X} \in [k]^n: \sum_{i = 1, i \neq j}^k \bm{1} \lbrb{m_i (\bm{X}) \leq \tau_i} \geq \tau} \\
        \text{where } m_i (\bm{X}) = \sum_{l = 1}^n \bm{1} \lbrb{X_l = i}.
    \end{gather*}
    Then, we must have:
    \begin{equation*}
        \P (B \mid A) \geq \P (B).
    \end{equation*}
\end{lemma}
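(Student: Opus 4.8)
The plan is to prove the equivalent inequality $\P(A \cap B) \ge \P(A)\,\P(B)$, i.e.\ that the indicators $\bm{1}_A$ and $\bm{1}_B$ are nonnegatively correlated, by conditioning on the count $W := m_j(\bm{X})$. Note $W \sim \mathrm{Binomial}(n, p_j)$, and we may assume $\P(A) = \P(W \ge \tau_j) > 0$, since otherwise there is nothing to prove. Conditioned on $W = m$, by exchangeability of $X_1, \dots, X_n$ the $n - m$ samples not equal to $j$ are i.i.d.\ from the renormalized law $q$ on $[k] \setminus \{j\}$ given by $q_i = p_i / (1 - p_j)$; in particular the conditional law of $(m_i(\bm{X}))_{i \ne j}$ is $\mathrm{Multinomial}(n - m, q)$. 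Since the event $B$ depends only on these counts, $g(m) := \P(B \mid W = m)$ equals the probability that a $\mathrm{Multinomial}(n - m, q)$ vector $(M_i)_{i \ne j}$ satisfies $\sum_{i \ne j} \bm{1}\{M_i \le \tau_i\} \ge \tau$, and $\P(B) = \E[g(W)]$.

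The crux is that $g$ is nondecreasing in $m$. Fix i.i.d.\ draws $Y_1, \dots, Y_n$ from $q$ and, for $N \in \{0, 1, \dots, n\}$, set $M_i^{(N)} = \sum_{l = 1}^N \bm{1}\{Y_l = i\}$. For each fixed $i$ the map $N \mapsto M_i^{(N)}$ is nondecreasing, hence $N \mapsto \bm{1}\{M_i^{(N)} \le \tau_i\}$ is nonincreasing, hence $N \mapsto \sum_{i \ne j} \bm{1}\{M_i^{(N)} \le \tau_i\}$ is nonincreasing, and therefore $N \mapsto \bm{1}\{\sum_{i \ne j} \bm{1}\{M_i^{(N)} \le \tau_i\} \ge \tau\}$ is nonincreasing along this single coupled realization. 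Taking expectations, the probability of $B$ computed from $N$ samples is nonincreasing in $N$; since $g(m)$ is exactly this probability with $N = n - m$, we conclude that $g$ is nondecreasing in $m$. (Alternatively, the desired correlation inequality follows in one line from the negative association of multinomial counts applied to the coordinatewise increasing function $\bm{1}_A$ of $m_j$ and the coordinatewise decreasing function $\bm{1}_B$ of $(m_i)_{i \ne j}$; the coupling argument above keeps things self-contained.)

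Finally I combine these. Write $t = \lceil \tau_j \rceil$, so that $W \ge t$ on $A$ and $W \le t - 1$ on $A^c$. Since $g$ is nondecreasing, $\P(B \mid A) = \E[g(W) \mid A] \ge g(t)$, while $g(W) \le g(t)$ on $A^c$. Splitting $\P(B) = \E[g(W)\bm{1}_A] + \E[g(W)\bm{1}_{A^c}] \le \P(A)\,\P(B \mid A) + g(t)\,\P(A^c) \le \P(A)\,\P(B \mid A) + \P(B \mid A)\,\P(A^c) = \P(B \mid A)$, which is the claim. The only real work is the monotonicity of $g$; the rest is the standard fact that averaging a monotone function over an upper-tail event can only raise its value, together with the elementary description of the conditional law of the non-$j$ samples. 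I expect the coupling step (and making precise that ``conditioned on $W = m$ the remaining samples are i.i.d.\ from $q$'') to be the only point requiring care.
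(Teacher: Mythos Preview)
Your proposal is correct and follows essentially the same route as the paper: both arguments condition on $m_j(\bm{X})$, establish that $\P(B\mid m_j(\bm{X})=m)$ is nondecreasing in $m$ via the identical prefix-coupling with i.i.d.\ draws from the conditional law on $[k]\setminus\{j\}$, and then conclude by averaging over the upper and lower tails of $m_j(\bm{X})$. Your packaging (defining $g(m)$ and the final inequality chain) is a bit cleaner than the paper's, and the aside about negative association is a nice bonus, but the substance is the same.
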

\begin{proof}
    Note that
    \begin{equation*}
        \P(B \mid A) - \P (B) = \P (A^c) \cdot (\P (B \mid A) - \P (B \mid A^c)).
    \end{equation*}
    Furthermore, we have:
    \begin{gather*}
        \P (B \mid A) = \frac{1}{\sum_{t \geq \tau_j} \P (m_j (\bm{X}) = t)}\sum_{t \geq \tau_j} \P (m_j (\bm{X}) = t) \P (B \mid m_j (\bm{X}) = t) \\
        \P (B \mid A^c) = \frac{1}{\sum_{t < \tau_j} \P (m_j (\bm{X}) = t)}\sum_{t < \tau_j} \P (m_j (\bm{X}) = t) \P (B \mid m_j (\bm{X}) = t). \numberthis{} \label{eq:pb_decomp}
    \end{gather*}
    Moreover, observe by symmetry:
    \begin{gather*}
        \P (B \mid m_j (\bm{X}) = t) = \P (B \mid T_t) \\
        \text{ where } T_t \coloneqq \{\bm{X}: X_{n}, X_{n - 1} \dots, X_{n - t + 1} = j \text{ and } \forall l \leq n - t: X_{l} \neq j\} \numberthis{} \label{eq:t_m_j_equiv}
    \end{gather*}
    and that conditioned on $T$, $X_l$ for $l \leq n - t$ is a distribution over $[k] \setminus \{j\}$ with probabilities proportional to $\{p_i\}_{i \neq j}$. Next, we show for all $t_1 \geq t_2$
    \begin{equation}
        \label{eq:t_1_t_2_comp}
        \P (B \mid T_{t_1}) \geq \P (B \mid T_{t_2}).
    \end{equation} 
    Let $D_j$ denote the conditional distribution of $D$ conditioned on $[k] \setminus \{j\}$ and $Y_1, \dots Y_n$ be i.i.d. draws from $D_j$. Then define 
    \begin{equation*}
        \bm{X}^{t} = \{X^t_1, \dots, X^t_n\} \text{ with } X^t_i = 
        \begin{cases}
            Y_i &\text{if } i \leq n - t \\
            j &\text{otherwise}
        \end{cases}.
    \end{equation*}
    Note that $\bm{X}^{t}$ is identical in distribution to $\bm{X}$ conditioned on $T_t$. Furthermore, note that by definition $\bm{X}^{t_2} \in B$ implies $\bm{X}^{t_1} \in B$. Hence, \cref{eq:t_1_t_2_comp} is established. This, by \cref{eq:t_m_j_equiv}, also establishes
    \begin{equation*}
        \forall t_1 \geq t_2: \P (B \mid m_j (\bm{X}) = t_1) \geq \P (B \mid m_j (\bm{X}) = t_2)
    \end{equation*}
    which yields the lemma by \cref{eq:pb_decomp}.
\end{proof}

\end{document}